\documentclass[a4paper]{article}

\usepackage{amsmath,amssymb,color}

\usepackage{graphicx}
\usepackage{subfigure}
\usepackage{subdepth}
\usepackage{stmaryrd} 
\usepackage{lineno,etoolbox,units}

\allowdisplaybreaks
\DeclareGraphicsExtensions{.pdf,.eps}


\newcommand{\vertiii}[1]{{\left\vert\kern-0.25ex\left\vert\kern-0.25ex\left\vert #1 
    \right\vert\kern-0.25ex\right\vert\kern-0.25ex\right\vert}}
\newcommand{\bu}{\boldsymbol u}

\newcommand{\bv}{\boldsymbol v}
\newcommand{\bV}{\boldsymbol V}
\newcommand{\bD}{\boldsymbol D}
\newcommand{\bW}{\boldsymbol W}
\newcommand{\bZ}{\boldsymbol Z}
\newcommand{\bw}{\boldsymbol w}
\newcommand{\bbeta}{\boldsymbol  \eta}

\newcommand{\bz}{\boldsymbol z}

\newcommand{\bn}{\boldsymbol n}
\newcommand{\bm}{\boldsymbol m}

\newcommand{\balpha}{\boldsymbol \alpha}

\newcommand{\bpsi}{\boldsymbol \psi}

\newcommand{\be}{\boldsymbol e}

\newcommand{\bvar}{\boldsymbol \varphi}

\newcommand{\bs}{\boldsymbol s}

\newcommand{\bff}{\boldsymbol f}

\newtheorem{Theorem}{Theorem}[section]
\newtheorem{lema}[Theorem]{Lemma}
\newtheorem{proposition}[Theorem]{Proposition}
\newtheorem{corollary}[Theorem]{Corollary}

\newtheorem{remark}[Theorem]{Remark}

\newtheorem{Proof}{{\em Proof:}}

\newenvironment{proof}{\begin{Proof}\rm}{\hfill $\Box$ \end{Proof}}

\newcommand{\linenomathpatch}[1]{%
  \cspreto{#1}{\linenomath}%
  \cspreto{#1*}{\linenomath}%
  \csappto{end#1}{\endlinenomath}%
  \csappto{end#1*}{\endlinenomath}%
}
\linenomathpatch{equation}
\linenomathpatch{gather}
\linenomathpatch{multline}
\linenomathpatch{align}
\linenomathpatch{alignat}
\linenomathpatch{flalign}

\usepackage{hyperref}

\usepackage{macros}

\title{Pressure and convection robust bounds for continuous interior penalty  divergence-free finite element methods
for the incompressible Navier-Stokes equations}
\author{Bosco
Garc\'{\i}a-Archilla\thanks{Departamento de Matem\'atica Aplicada
II, Universidad de Sevilla, Sevilla, Spain. Research is supported by
Spanish MCINYU under grants PGC2018-096265-B-I00 and PID2019-104141GB-I00 (bosco@esi.us.es)}
  \and Julia Novo\thanks{Departamento de
Matem\'aticas, Universidad Aut\'onoma de Madrid, Spain. Research is supported
by Spanish MINECO
under grants PID2019-104141GB-I00 and VA169P20  (julia.novo@uam.es)}}
\date{\today}
\begin{document}

\maketitle

\begin{abstract}
 In this paper we analyze a pressure-robust method based on divergence-free mixed finite element methods with
 continuous interior penalty stabilization. The main goal is to prove an $O(h^{k+1/2})$ error estimate for
 the $L^2$ norm of the velocity in the convection dominated regime. This bound is pressure robust (the error
 bound of the velocity does not depend on the pressure) and also convection robust (the constants in the 
 error bounds are independent of the Reynolds number). 
 \end{abstract}

\noindent{\bf AMS subject classifications.} 65M12, 65M15, 65M60. \\
\noindent{\bf Keywords.} Incompressible Navier--Stokes equations, divergence-free mixed finite element methods, continuous interior penalty, pressure-robustness

\section{Introduction}
We consider finite element approximations to the incompressible Navier-Stokes equations. Our aim is to propose a method that is able to get both pressure-robust error bounds and convection robust error bounds. Pressure-robust error bounds are those for which the error bounds of the velocity are independent on Sobolev norms of the true pressure. Convection robust error bounds are those in which the constants in the error bounds are independent of the Reynolds number (see e.g., \cite{review} for the practical implications of this property).
For the first property, we need to use divergence free pairs
of finite elements spaces. Using these pairs there is an exact conservation of mass and the numerical solution satisfies a so-called fundamental invariance property: gradient forces on the right-hand side possess an impact only on the discrete pressure. We refer to 
\cite{john_et_al_rev} for a comprehensive discussion of these kind of methods.
In the recent review \cite{review}, there is a deep study about finite element methods for the Navier-Stokes equations having the convection robust property. In \cite{review} those methods are classified in three sets depending on the rate of convergence achieved for the $L^2$ norm of the velocity: $O(h^{k-1})$, $O(h^k)$ and $O(h^{k+1/2})$ for methods based on piecewise polynomials for the velocity of degree $k$. As explained in \cite{review}, the exact conservation of mass has an influence also in the convection robustness of the bounds. More precisely, the plain Galerkin method, without any kind of stabilization, with divergence-free finite elements possesses pressure-robust and convection robust bounds of order $O(h^k)$, whereas no such bounds seem to be possible for non divergence-free elements. The question about the possibility of getting
order $O(h^{k+1/2})$ for these divergence-free methods adding some kind of stabilization has been open for a while. 

Recently, the authors of \cite{barren_cip} propose a continuous interior penalty stabilization for divergence-free methods that is able to give a method
of order $k+1/2$ and pressure robust bounds when applied to a steady Oseen equation. The stabilization in \cite{barren_cip} is inspired in the stabilization proposed in \cite{ahmed_etal}. The authors in \cite{ahmed_etal} proposed and analyzed a
method for the steady Oseen equations for which pressure and convection robust bounds of order $k+1/2$ are proved. For the
method in \cite{ahmed_etal} the authors use divergence-free elements and add 
a bulk term in the form of a residual-based least squares stabilization of the vorticity equation plus a continuous interior penalty stabilization term. The main idea in \cite{barren_cip} is to replace the bulk term by two more continuous penalty stabilization terms based on higher order derivatives.

In the present paper, we follow the ideas of \cite{barren_cip} to extend the method to the incompressible Navier-Stokes equations. To this end, the convective field in the stabilization terms in \cite{barren_cip} is
replaced by the velocity approximation. To prove the rate of convergence $k+1/2$ for $k\ge 4$ we
need some a priori bounds for the velocity approximation and the velocity error. For this reason, in the error analysis of
the present paper that works for any value of $k$, we first prove a rate of convergence of order $k$ for the method without any a priori bound and then used the previous proved bounds to achieved the optimal rate of convergence $k+1/2$. 

While writing this paper we found a related work, see \cite{da_veiga_etal}, in which the authors obtain pressure and convection robust bounds of order $k+1/2$ for the incompressible Navier-Stokes equations in 2 space dimensions. In \cite{da_veiga_etal} a divergence-free pair of mixed finite elements is applied with a discontinuous Galerkin in time approximation of order one. The authors
in \cite{da_veiga_etal} add a SUPG-curl stabilization that is also inspired in the method proposed in \cite{ahmed_etal} for
the steady Oseen problem. As the authors state, they need a discontinuous method in time to be able to prove error bounds with the SUPG-curl residual method. There are again different stabilization terms in the method proposed in \cite{da_veiga_etal}, one of them involves the jumps
of the gradients along the faces, and is then analogous to the first continuous interior penalty stabilization term in \cite{ahmed_etal}, \cite{barren_cip} and the present paper. The other stabilization is a residual SUPG-curl stabilization. Comparing with \cite{da_veiga_etal}, we propose a simpler method that can be accompanied by any time integrator. Also, as stated in 
\cite{da_veiga_etal}, the error analysis of a continuous interior penalty method reflecting pressure robustness for the incompressible Navier-Stokes equations is a major challenge that did not exist in the literature. This is exactly
the question we try to solve with the error analysis of the present paper. 

\section{Preliminaries and notation}
Along the paper we follow the notation in \cite{barren_cip}.

This paper studies incompressible flow problems that are modeled by the
incompressible Navier--Stokes equations
\begin{equation}
\begin{array}{rcll}
\label{NS} \partial_t\bu -\nu \Delta \bu + (\bu\cdot\nabla)\bu + \nabla p &=& \bff &\text{in }\ (0,T]\times\Omega, \\
\nabla \cdot \bu &=&0&\text{in }\ (0,T]\times\Omega,
\end{array}
\end{equation}
in a bounded domain $\Omega \subset {\mathbb R}^d$, $d \in \{2,3\}$.
The boundary of $\Omega$ is assumed to be polyhedral and Lipschitz.
In~\eqref{NS},
$\bu$ denotes the velocity field, $p$ the kinematic pressure, $\nu>0$ the kinematic viscosity coefficient,
 and $\bff$ represents the accelerations due to external body forces acting
on the fluid. The Navier--Stokes equations \eqref{NS} have to be complemented
with an initial condition $\bu(0)=\bu^0$ and with
boundary conditions. For simplicity,
we only consider the case of homogeneous
Dirichlet boundary conditions $\bu = \boldsymbol 0$ on $[0,T]\times \partial \Omega$.

We define the space
$$
{\boldsymbol{\cal V}}=\left\{\bv\in H_0^1(\Omega)^d: {\rm div} \bv=0\right\}.
$$
It is known that a potential can be associated to every divergence-free function in $\Omega$. The space that captures the
kernel of the divergence operator is \cite{costabel}
$$
{\bZ}=\left\{\bz: \ {\rm components}\ {\rm of}\ \bz \ {\rm in}\ H^1(\Omega), {\rm curl}\bz\in H_0^1(\Omega)^d\right\}.
$$
If $\bu\in H^r(\Omega)^d\cap{\boldsymbol{\cal V}}$ then there exists $\bz\in \bZ$ with components in $H^{r+1}(\Omega)$ such that
$$
{\rm curl \bz}=\bu,\quad \|\bz\|_{H^{r+1}(\Omega)}\le C \|\bu\|_{H^r(\Omega)^d}.
$$
Although $\bz$ is a scalar for $d=2$ we keep the bold notation for both dimensions. Observe that for $d=3$ the above norm
of $\bz$ should be $\|\bz\|_{H^{r+1}(\Omega)^d}$. For simplicity, in the sequel we will denote this norm both for scalar
or vectorial functions as:
$$
\|\bz\|_{k+1}:=\|\bz\|_{H^{r+1}(\Omega)^d},
$$
and we will use the notation
$$
\|\bz\|_{k+1,K}:=\|\bz\|_{H^{r+1}(K)^d},\quad K\subset \Omega.
$$
Let us denote by  $\{\mathcal{T}_{h}\}=\{(K_j,\phi_{j}^{h})_{j \in J_{h}}\}$, $h>0$, a family of partitions of
$\overline\Omega$, where $h$ is the maximum diameter of the mesh cells $K_j\in \mathcal{T}_{h}$
and $\phi_j^h$ are the mappings from the reference simplex $K_0$ onto $K_j$.
We assume that the family of partitions is shape-regular and  quasi-uniform. For $K\in \mathcal{T}_{h}$ we denote
by ${\cal F}_K$ the set of its faces. The collection of faces from the triangulation is denoted by $\cal F$ and
the collection of interior faces by ${\cal F}^i$. For
$F\in {\cal F}$ we denote by $h_F={\rm diam}(F)$ and by $|F|$ the measure of $F$. For a vector valued function $\bv$
we define the tangetial jumps across $F=K_1\cap K_2$, $K_1,K_2\in \mathcal{T}_{h}$ as
$$
\llbracket \bv\times \bn \rrbracket:=\bv_1\times \bn_1+\bv_2\times\bn_2
$$
where $\bv_i=\bv\mid_{K_i}$ and $\bn_i$ is the unit normal pointing outwards $K_i$. If $F$ is a boundary face we define
$$
\llbracket \bv\times \bn \rrbracket:=\bv\times \bn.
$$
We define the following broken inner products
\begin{eqnarray}\label{broken_in}
(\bv,\bw)_h:=\sum_{K\in \mathcal{T}_{h}}(\bv,\bw)_K,\quad <\bv,\bw>_{\cal F}:=\sum_{F\in {\cal F}}<\bv,\bw>_F,
\end{eqnarray}
with associated norms $\|\cdot\|_h,\|\cdot\|_{\cal F}$. Let us observe that the second norm can also be defined over
${\cal F}^i$.
We also consider the semi-norms
$$
|\bu|_{k,h}:=\sum_{|\alpha|=k}\|D^\alpha \bu\|_h.
$$

On $\mathcal T_h$, the following finite element spaces are defined. For $l\ge 1$ the standard piecewise continuous space
\begin{eqnarray*}
\bW_h^l=\left\{\bw_h\in H_0^1(\Omega)^d\ \mid \ {\bw_h}_{\mid_K}\in {\Bbb P}_l(K),\ \forall\ K\in \mathcal T_h\right\},
\end{eqnarray*}
and for $l\ge 0$ the discontinuous spaces
\begin{eqnarray*}
\bD_h^l&=&\left\{\bw_h\in L^2(\Omega)^d\ \mid \ {\bw_h}_{\mid_K}\in {\Bbb P}_l(K)^3,\ \forall\ K\in \mathcal T_h\right\}\\
D_h^l&=&\left\{\bw_h\in L^2(\Omega)\ \mid \ {\bw_h}_{\mid_K}\in {\Bbb P}_l(K),\ \forall\ K\in \mathcal T_h\right\}.
\end{eqnarray*}
For $k\ge 2$ we assume there exists finite element spaces $\bV_h\subset H_0^1(\Omega)^d$, $Q_h\subset L_0^2(\Omega)$ and
$$
{\boldsymbol{\cal V}}_h=\left\{\bv_h\in \bV_h\mid {\rm div}(\bv_h)=0\quad {\rm in}\ \Omega\right\}.
$$ 

The inverse inequality
\begin{equation}
\label{inv} | v_{h} |_{W^{m,p}(K)} \leq c_{\rm{inv}}
h_K^{n-m-d\left(\frac{1}{q}-\frac{1}{p}\right)}\quad 
| v_{h}|_{W^{n,q}(K)} \quad \forall\ v_{h} \in D_{h}^{l},
\end{equation}
holds, 
with $0\leq n \leq m \leq 1$, $1\leq q \leq p \leq \infty$, and $h_K$
being the diameter of~$K \in \mathcal T_h$, because the family of partitions is quasi-uniform, e.g., see \cite[Theorem~3.2.6]{Cia78}.

As in \cite{barren_cip} we assume
\begin{enumerate}
\item ${\rm div}(\bV_h)\subset Q_h$.
\item  The pair $(\bV_h,Q_h)$ is inf-sup stable.
\item  $\bW_h^k\subset \bV_h\subset \bW_h^r$, for some $k,r\ge 2$.
\item  There exists a finite element space $\bZ_h\subset\bZ$ such that ${\rm curl}(\bZ_h)={\boldsymbol{\cal V}}_h$.
\item For $P_h$ the orthogonal projection onto $\bZ_h$ the following error estimate holds for $\bz\in \bZ$
with components in $H^{k+2}(\Omega)$ and $|\alpha|\le k+1$
\begin{equation}\label{cota_Ph_z}
\|h^{|\alpha|}\partial^\alpha (\bz-P_h\bz)\|_h\le C h^{k+2}\|\bz\|_{k+2}.
\end{equation}
\end{enumerate}
As shown in \cite{barren_cip} the Scott-Vogelious finite element spaces on
barycenter-refined  triangulations verify the above
assumptions with velocity and pressure spaces, $k\ge 2$
\begin{eqnarray*}
\bV_h=\bW_h^k\quad Q_h=D_h^{k-1}\cap L_0^2(\Omega).
\end{eqnarray*}
The potential space is given by
$$
\bZ_h^{k+1}=\left\{\bz_h\in C^1(\Omega),\ \bz_h\mid_K\in {\Bbb P}_{k+1}(K),\ K\in \mathcal{ T}_{h}\right\}.
$$
In the rest of the paper we will consider these finite element spaces.
For these spaces apart from the above assumptions the following Lemma can be found in \cite[Lemma 4.1]{barren_cip}.
\begin{lema}\label{lema_41} Let $m\ge 3$. There exists $C>0$, independent of $h$, such that for all 
$\bw_h\in \bD_h^{m}$ it holds
$$
\inf_{\bm_h\in \bZ_h^m}\|h^{3/2}(\bw_h-\bm_h)\|_h\le C_{\ref{lema_41}}\left(||h^2\llbracket \bw_h\rrbracket||_{{\cal F}^i}
+||h^3\llbracket \nabla\bw_h\rrbracket||_{{\cal F}^i}\right).
$$
\end{lema}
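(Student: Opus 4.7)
The plan is to construct an explicit averaging operator $\mathcal A_h\colon \bD_h^m\to\bZ_h^m$ of Oswald/Scott--Zhang type and to take $\bm_h=\mathcal A_h\bw_h$ in the infimum. Once we prove the elementwise estimate
\begin{equation*}
\|\bw_h-\mathcal A_h\bw_h\|_K^2\le C\sum_{F\subset \omega_K\cap\mathcal F^i}\bigl(h\,\|\llbracket \bw_h\rrbracket\|_F^2+h^3\,\|\llbracket\nabla\bw_h\rrbracket\|_F^2\bigr),
\end{equation*}
where $\omega_K$ denotes the patch of elements sharing at least a vertex with $K$, multiplying by $h^3$, summing over $K\in\mathcal T_h$ (each face appears a uniformly bounded number of times by shape regularity), and taking the square root yields the stated inequality.

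To build $\mathcal A_h$, I would use the canonical $C^1$ degrees of freedom of $\bZ_h^m$ on the barycenter-refined mesh. For each DOF attached to a single element (interior Lagrange points), retain the value of $\bw_h|_K$; for each shared DOF (vertex values, vertex gradient components, and, when present, normal-derivative moments on inter-element faces), assign to $\mathcal A_h\bw_h$ the arithmetic average of the analogous quantity evaluated from every adjacent element. Boundary DOFs are set by a one-sided value. By construction $\mathcal A_h\bw_h\in\bZ_h^m$, and on each $K$ the polynomial $\bw_h-\mathcal A_h\bw_h$ has DOF values which are linear combinations of the jumps $\llbracket\bw_h\rrbracket$ (for value-type DOFs) and $\llbracket\nabla\bw_h\rrbracket$ (for gradient-type DOFs) across the faces meeting the corresponding vertex or edge.

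A standard scaling/norm-equivalence argument on the reference element then gives, writing $\mathcal N_K$ for the collection of DOF locations in $\overline K$,
\begin{equation*}
\|\bw_h-\mathcal A_h\bw_h\|_K^2\;\lesssim\; h_K^{d}\sum_{v\in\mathcal N_K}\bigl|(\bw_h-\mathcal A_h\bw_h)(v)\bigr|^2+h_K^{d+2}\sum_{v\in\mathcal N_K}\bigl|\nabla(\bw_h-\mathcal A_h\bw_h)(v)\bigr|^2,
\end{equation*}
which uses the hypothesis $m\ge 3$ to ensure the required reference norms are equivalent. A pointwise jump at a vertex of a face $F$ is controlled, via an inverse inequality \eqref{inv} applied on the $(d-1)$-dimensional polynomial trace space, by $h^{-(d-1)/2}\|\llbracket \bw_h\rrbracket\|_F$, and analogously for $\llbracket\nabla\bw_h\rrbracket$. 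Multiplying out the exponents $h^d\cdot h^{-(d-1)}=h$ and $h^{d+2}\cdot h^{-(d-1)}=h^3$ produces precisely the weights in the elementwise estimate displayed above.

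The main obstacle is verifying that the averaging really maps into $\bZ_h^m\subset C^1(\Omega)$ on the barycenter-refined triangulation and, crucially, that every DOF mismatch can be expressed purely through jumps of $\bw_h$ and $\nabla\bw_h$ — i.e.\ that no higher-order jumps (such as $\llbracket\nabla^2\bw_h\rrbracket$) are needed. This is a combinatorial/bookkeeping step that depends intimately on the structure of the composite $C^1$ element underlying the Scott--Vogelius potential space: its DOF set consists exclusively of point values and first-derivative values, so averaging them across shared simplices generates only differences of traces of $\bw_h$ and $\nabla\bw_h$. Once this is in place, the scaling calculation sketched above is routine.
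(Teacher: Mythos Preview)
The paper does not actually prove this lemma; it is quoted verbatim from \cite[Lemma~4.1]{barren_cip} and the only additional comment is that the 2D proof there can be extended to $d=3$ following~\cite{21}. So there is no ``paper's own proof'' to compare against.

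That said, your strategy is exactly the one used in \cite{barren_cip}: construct an Oswald-type $C^1$ averaging operator $\mathcal A_h:\bD_h^m\to\bZ_h^m$ by averaging the degrees of freedom of the composite $C^1$ element (HCT for $m=3$, its higher-order variants for $m\ge 4$) over adjacent elements, then use norm equivalence on the reference element together with a face inverse inequality to turn DOF mismatches into $L^2$ norms of $\llbracket\bw_h\rrbracket$ and $\llbracket\nabla\bw_h\rrbracket$ over interior faces. Your scaling bookkeeping ($h^d\cdot h^{-(d-1)}=h$ for value DOFs, $h^{d+2}\cdot h^{-(d-1)}=h^3$ for gradient DOFs) is correct and reproduces the weights $h^2$ and $h^3$ after multiplying by $h^{3/2}$ and taking square roots.

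The one point you correctly flag as ``the main obstacle'' is indeed the only nontrivial ingredient: verifying that the DOFs of the composite $C^1$ element on the barycenter-refined mesh involve at most first-order derivatives, so that averaging introduces only jumps of $\bw_h$ and $\nabla\bw_h$ and no $\llbracket\nabla^2\bw_h\rrbracket$. For the cubic HCT element this is classical (vertex values, vertex gradients, edge-midpoint normal derivatives); for the higher-order macro-elements used in \cite{barren_cip} it is also true but requires citing the specific DOF description. Once that structural fact is in hand, the rest of your argument is routine and matches the cited reference.
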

We remark that the above result is proved in~\cite{barren_cip} only for $d=2$, although the authors indicate that the proof can be extended to the case~$d=3$ arguing as in~\cite{21}.

 Let  $\bs_h$ be the modified Stokes projection defined by
$$
 \displaylines{
 (\nabla \bs_h,\nabla \bvar_h)  = (\nabla \bu,\nabla \bvar_h),\qquad \bvar_h \in {\boldsymbol{\cal V}}_h,
 }
 $$
 for which the following bound holds \cite[(22)]{nos_oseen}
 \begin{equation}
 \label{err_sh}
 \left\| \bs_h-\bu\right\|_0 + h\left\| \nabla(\bs_h -\bu)\right\|_0 \le C_{\bs}\left\|\bu\right\|_{k+1}h^{k+1}.
 \end{equation}
Following \cite{Chen} one can also prove
\begin{equation}\label{sto_inf}
\|\bs_h\|_\infty\le C_{\bs} \|\bu\|_\infty,\quad \|\nabla \bs_h\|_\infty\le C_{\bs} \|\nabla \bu\|_\infty.
\end{equation}
For the velocity $\bu$ in \eqref{NS} we denote by $\bz$ its corresponding potential. Then,
we define $\bpsi_h=P_h \bz$ and $\bw_h={\rm curl} \bpsi_h$.
Adding and subtracting $\bs_h$ and applying \eqref{sto_inf}, \eqref{inv}, \eqref{cota_Ph_z} and \eqref{err_sh} it follows that
  \begin{equation}
 \label{nabla_w_h_inf} \left\| \nabla\bw_h\right\|_\infty \le  C_{\bs} \left\|\nabla \bu\right\|_\infty + Ch^{k-d/2}\left\|\bz\right\|_{k+2}.
 \end{equation}
 Arguing similary,
  \begin{equation}
 \label{w_h_inf} \left\| \bw_h\right\|_\infty \le C_{\bs} \left\| \bu\right\|_\infty + Ch^{k+1-d/2}\left\|\bz\right\|_{k+2}.
 \end{equation}
 Thus, in the sequel we denote
 \begin{equation}
 \label{C1wh}
 C_{1,\bw_h}=\max_{0\le t\le T} \left\|\nabla\bw_h(t)\right\|_{\infty},
 \end{equation}
  \begin{equation}
 \label{Cwh}
 C_{\bw_h}=\max_{0\le t\le T} \left\|\bw_h(t)\right\|_{\infty}.
 \end{equation}
\section{Error analysis}
The stabilized finite element approximation is defined as follows. 

Find: $(\bu_h(t),p_h(t))\in (\bV_h,Q_h)$, $t\in (0,T]$ such that
for all $(\bv_h,q_h)\in (\bV_h,Q_h)$ it holds
\begin{eqnarray}\label{metodo}
(\bu_{h,t},\bv_h)+\nu(\nabla \bu_h,\nabla \bv_h)+(\bu_h\cdot\nabla \bu_h,\bv_h)+S(\bu_h,\bv_h)
-(p_h,\nabla \cdot\bv_h)\nonumber\\=(\bff,\bv_h),\\
(\nabla \cdot \bu_h,q_h)=0,\nonumber
\end{eqnarray}
where, following \cite{barren_cip} the stabilized term is defined as
$$
S(\bu_h,\bv_h)=\|\bu_h\|_{0,\infty}^{-1}\sum_{j=1}^3 \delta_j S_j(\bu_h,\bv_h),
$$
where
\begin{eqnarray}\label{s1}
S_1(\bu_h,\bv_h)&=&<h^2 \llbracket (\bu_h\cdot \nabla )\bu_h\times \bn\rrbracket,\llbracket (\bu_h\cdot \nabla )\bv_h\times \bn\rrbracket>_{{\cal F}^i}\\
\label{s2}
S_2(\bu_h,\bv_h)&=&<h^4 \llbracket{\cal B}\bu_h\rrbracket,\llbracket{\cal B}\bv_h\rrbracket>_{{\cal F}^i}\\
\label{s3}
S_3(\bu_h,\bv_h)&=&<h^6 \llbracket\nabla{\cal B}\bu_h\rrbracket,\llbracket\nabla{\cal B}\bv_h\rrbracket>_{{\cal F}^i},
\end{eqnarray}
where 
$$
({\cal B}\bw)\mid_K={\rm curl}((\bu_h\cdot\nabla)\bw)\mid_K,
$$
and $\delta_j$, $j=1,2,3$ are non-dimensional stabilization parameters.

We introduce two constants, a characteristic length $l_0$ (which, for the purpose of simplicity, we may choose as the diameter of~$\Omega$ or $\left|\Omega\right|^{1/d}$) and a characteristic time $\tau_0$ (which for the purpose of simplicity we may choose as $\tau_0=T$) with which we define the constant
\begin{equation}
v_0=\frac{l_0}{\tau_0}.
\label{v_0}
\end{equation}
We define the following norm for a regular enough function
\begin{equation}\label{estrella}
\|\bz\|_*^2=(\nu+v_0h)\sum_{j=0}^{\min(5,k+2)}h^{2j-4}\|D^j\bz\|_h^2.
\end{equation}
We recall the following local trace inequality: there exist a positive constant~$C_{\rm tr}$ such that for any~$K\in\mathcal{T}_{h}$ and $F\in {\cal F}_K$ and any~$v\in H^1(K)$, it holds
\begin{equation}
\label{trace}
\left\| v\right\|_{0,F} \le C_{\rm tr}\left(h_K^{-1/2} \left\| v\right\|_{0,K} + h_K^{1/2} \left|v\right|_{1,K}\right).
\end{equation}
We denote by
$$
\delta_{\min} = \min\{\delta_1,\delta_2,\delta_3\},\qquad
\delta_{\max} = \max\{\delta_1,\delta_2,\delta_3\}.
$$
Following \cite{barren_cip}, for the velocity $\bu$ in \eqref{NS}, we denote by $\bz$ its corresponding potential. Then,
we define $\bpsi_h=P_h \bz$ and $\bw_h={\rm curl} \bpsi_h$. Finally we denote by
$$
\be_h=\bu_h-\bw_h,\quad \bbeta_h=\bu-\bw_h. 
$$
We first observe that using the above definitions and \eqref{NS} we get for any $\bv_h\in {\boldsymbol{\cal V}}_h$
\begin{eqnarray}\label{wh}
(\bw_{h,t},\bv_h)+\nu(\nabla \bw_h,\nabla\bv_h)+(\bu\cdot\nabla \bu,\bv_h)+S(\bw_h,\bv_h)\\
\quad=S(\bw_h,\bv_h)-(\bbeta_t,\bv_h)-\nu(\nabla \bbeta,\nabla \bv_h)+(\bff,\bv_h).\nonumber
\end{eqnarray}
Subtracting \eqref{wh} from \eqref{metodo} and taking $\bv_h=\be_h$ we get
\begin{eqnarray}\label{eq_error}
&&\frac{1}{2}\frac{d}{dt}\|\be_h\|_0^2+\nu\|\nabla \be_h\|_0^2+S(\be_h,\be_h)=(\bu\cdot\nabla \bu-\bu_h\cdot \nabla \bu_h,\be_h)+S(\bw_h,\be_h)\nonumber\\
&&\quad +(\bbeta_{h,t},\be_h)+\nu(\nabla \bbeta_h,\nabla \be_h).
\end{eqnarray}
For the nonlinear term we write
\begin{eqnarray*}
(\bu\cdot\nabla \bu-\bu_h\cdot \nabla \bu_h,\be_h)=((\bu-\bu_h)\cdot \nabla \bu,\be_h)+(\bu_h\cdot\nabla(\bu-\bu_h),\be_h).
\end{eqnarray*}
And adding $(\bu_h\cdot\nabla \be_h,\be_h)=0$, we may write
\begin{eqnarray*}
(\bu\cdot\nabla \bu-\bu_h\cdot \nabla \bu_h,\be_h)=((\bu-\bu_h)\cdot \nabla \bu,\be_h)+(\bu_h\cdot\nabla\bbeta_h,\be_h)=
\nonumber\\
(\bbeta_h\cdot\nabla \bu,\be_h)-(\be_h\cdot\nabla \bu,\be_h)+(\bu_h\cdot\nabla\bbeta_h,\be_h).
\end{eqnarray*}
Inserting the above equality into \eqref{eq_error} we finally obtain
\begin{eqnarray}\label{eq_error_2}
&&\frac{1}{2}\frac{d}{dt}\|\be_h\|_0^2+\nu\|\nabla \be_h\|_0^2+S(\be_h,\be_h)=(\bbeta_h\cdot\nabla \bu,\be_h)-(\be_h\cdot\nabla \bu,\be_h)\nonumber\\
&&\quad+(\bu_h\cdot\nabla\bbeta_h,\be_h)+S(\bw_h,\be_h) +(\bbeta_{h,t},\be_h)+\nu(\nabla \bbeta_h,\nabla \be_h).
\end{eqnarray}
Now, for the first and second terms on the right-hand side of \eqref{eq_error_2} we get
\begin{eqnarray*}
|(\bbeta_h\cdot\nabla \bu,\be_h)|+|(\be_h\cdot\nabla \bu,\be_h)|\le \frac{3}{2}\|\nabla \bu\|_\infty\|\be_h\|_0^2+\frac{1}{2}\|\nabla \bu\|_\infty\| \bbeta_h\|_0^2.
\end{eqnarray*}
And since $\bbeta_h={\rm curl}(\bz-\bpsi_h)$, assuming $\nabla \bu\in L^\infty(\Omega)^d$ and  using definition \eqref{estrella} we can write
\begin{eqnarray}\label{part1}
|(\bbeta_h\cdot\nabla \bu,\be_h)|+|(\be_h\cdot\nabla \bu,\be_h)|\le \frac{3}{2}\|\nabla \bu\|_\infty\|\be_h\|_0^2+\frac{h}{2v_0}\left\|\nabla\bu\right\|_\infty\|\bz-\bpsi_h\|_*^2.
\end{eqnarray}
For the last two terms on the right-hand side of \eqref{eq_error_2} we obtain
\begin{eqnarray*}
|(\bbeta_{h,t},\be_h)|+\nu|(\nabla \bbeta_h,\nabla \be_h)|\le \frac{T}{2}\|\bbeta_{h,t}\|_0^2+\frac{\nu}{2}\|\nabla\bbeta_h\|_0^2
+\frac{1}{2T}\|\be_h\|_0^2+\frac{\nu}{2}\|\nabla \be_h\|_0^2.
\end{eqnarray*}
Using again definition \eqref{estrella} and applying \eqref{cota_Ph_z} to $\bz_t$ we get
\begin{eqnarray}\label{part2}
|(\bbeta_{h,t},\be_h)|+\nu|(\nabla \bbeta_h,\nabla \be_h)|&\le& C{T}h^{2k+2}\|\bu_t\|_{k+1}+C\|\bz-\bpsi_h\|_*^2\nonumber\\
&&\quad+\frac{1}{2T}\|\be_h\|_0^2+\frac{\nu}{2}\|\nabla \be_h\|_0^2.
\end{eqnarray}
Inserting \eqref{part1} and \eqref{part2} into \eqref{eq_error_2}  we reach
\begin{eqnarray}\label{eq_error_3}
\frac{d}{dt}\|\be_h\|_0^2+{\nu}\|\nabla \be_h\|_0^2+S(\be_h,\be_h)\le\left(3\|\nabla \bu\|_\infty+\frac{1}{T}\right){\|\be_h\|_0^2}+2(\bu_h\cdot\nabla\bbeta_h,\be_h)\nonumber\\
\quad+S(\bw_h,\bw_h)+CTh^{2k+2}\|\bu_t\|_{k+1}+C\left(\frac{h	}{v_0}\left\|\nabla\bu\right\|_\infty+1\right)
\|\bz-\bpsi_h\|_*^2.
\end{eqnarray}
To finish the error bound we need to bound the second and third terms on the right-hand side above. To this end we will apply some lemmas.

We will need estimations of $\bu_h-\bu$ and its first and second derivatives in different norms that we obtain next.
\begin{lema}\label{estimations} The following bounds hold for any multi-index $\balpha$ with $\left|\balpha\right| \le 2$,
\begin{align}\label{028}
\left\| h^{\left|\balpha\right|+1/2} \partial^{\balpha}(\bu_h-\bu) \right\|_{h}
 &\le
c_{\rm inv} h^{1/2} \left\|\be_h\right\|_0+ h^{\left|\balpha\right|+1/2} \left\|D^{\left|\balpha\right|+1}(\bz-\bpsi_h)\right\|_{h},
\\
\label{029}
\left\| h^{\left|\balpha\right|+3/2}\nabla  \partial^{\balpha}(\bu_h-\bu) \right\|_{h}
 &\le
c_{\rm inv} h^{1/2} \left\|\be_h\right\|_0+ h^{\left|\balpha\right|+3/2} \left\|D^{\left|\balpha\right|+2}(\bz-\bpsi_h)\right\|_{h}.
\end{align}
\end{lema}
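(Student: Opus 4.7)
The idea is to split $\bu_h-\bu$ into a fully discrete piece, to which the inverse inequality applies, and an approximation piece controlled directly by $\bz-\bpsi_h$. Since $\bu={\rm curl}\,\bz$ and $\bw_h={\rm curl}\,\bpsi_h$, I would write
$$
\bu_h-\bu = \be_h - {\rm curl}(\bz-\bpsi_h),
$$
and apply the element-wise triangle inequality inside $\|\cdot\|_h$ to get
$$
\|h^{|\balpha|+1/2}\partial^{\balpha}(\bu_h-\bu)\|_h \le \|h^{|\balpha|+1/2}\partial^{\balpha}\be_h\|_h + \|h^{|\balpha|+1/2}\partial^{\balpha}{\rm curl}(\bz-\bpsi_h)\|_h.
$$
The second summand is immediate: since ${\rm curl}$ is a first-order differential operator, $\partial^{\balpha}{\rm curl}(\bz-\bpsi_h)$ is a linear combination of derivatives of $\bz-\bpsi_h$ of order $|\balpha|+1$, which yields the desired $h^{|\balpha|+1/2}\|D^{|\balpha|+1}(\bz-\bpsi_h)\|_h$ contribution.

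For the first summand, I would use that $\be_h=\bu_h-\bw_h$ is piecewise polynomial on the mesh (with $\bu_h\in\bW_h^k$ and ${\rm curl}\,\bpsi_h$ a piecewise ${\Bbb P}_k$ vector field since $\bpsi_h\in \bZ_h^{k+1}$), so the inverse inequality \eqref{inv} applies on each element $K$. Since \eqref{inv} is stated only for differences in derivative order at most one, I would iterate it $|\balpha|$ times, peeling off one derivative at a time, to obtain
$$
\|\partial^{\balpha}\be_h\|_{0,K}\le c_{\rm inv}^{|\balpha|}\, h_K^{-|\balpha|}\|\be_h\|_{0,K}.
$$
Multiplying by $h^{|\balpha|+1/2}$, using quasi-uniformity of the mesh, and summing over $K$ then leaves the factor $c_{\rm inv} h^{1/2}\|\be_h\|_0$ claimed in \eqref{028}, after absorbing powers of $c_{\rm inv}$ into the generic constant of the same name.

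The estimate \eqref{029} follows from the very same decomposition, only with one extra derivative everywhere: the discrete part uses the inverse inequality $|\balpha|+1$ times to shed all $|\balpha|+1$ derivatives of $\be_h$ at the cost of $h^{-(|\balpha|+1)}$, while the approximation part generates derivatives of $\bz-\bpsi_h$ of order $|\balpha|+2$, yielding the right-hand side of \eqref{029}. There is no real obstacle in this argument; the only subtle point is the bookkeeping for the iteration of \eqref{inv}, and the observation that $\bw_h$ really is a global piecewise polynomial (so \eqref{inv} may be legitimately applied to $\be_h$ element by element). Once these are in hand, both \eqref{028} and \eqref{029} are direct consequences of the triangle inequality and the inverse estimate.
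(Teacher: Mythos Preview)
Your proposal is correct and follows essentially the same approach as the paper: split $\bu_h-\bu=\be_h-\bbeta_h$ with $\bbeta_h={\rm curl}(\bz-\bpsi_h)$, apply the inverse inequality to the discrete piece $\be_h$, and bound the approximation piece directly in terms of derivatives of $\bz-\bpsi_h$. The paper's proof is terser (it treats one representative second-order derivative and leaves the rest implicit), while you are more explicit about iterating \eqref{inv} and about why $\be_h$ is piecewise polynomial, but the argument is the same.
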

\begin{proof}
 We prove~(\ref{028}--\ref{029}) for 
$(\bu_h-\bu)_{xy}$  since the rest of the second  and first order derivatives are dealt with similarly.
By writing
$$
(\bu-\bu_h)_{yx} = (\be_h)_{yx}+(\bbeta_h)_{yx},
$$
and applying the inverse inequality \eqref{inv} we have,
\begin{align}\label{028b}
\left\| h^{5/2} (\bu_h-\bu)_{yx} \right\|_{h}
 &\le
c_{\rm inv} h^{1/2} \left\|\be_h\right\|_0+h^{5/2} \left\|D^3(\bz-\bpsi_h)\right\|_{h},
\\
\label{029b}
\left\| h^{7/2} \nabla(\bu_h-\bu)_{yx} \right\|_{h} 
 &\le  c_{\rm inv} h^{1/2} \left\|\be_h\right\|_0
+h^{7/2} \left\|D^4(\bz-\bpsi_h)\right\|_{h}.
\end{align}
\end{proof}

\begin{lema}\label{cons2} (Consistency of the stabilization term). 
Assume that $\bu \in H^{\min(4,k+1)}(\Omega)$. Then, the following bound holds
\begin{align}
\label{eq:cons2} S(\bw_h,\bw_h) \le& C\frac{\delta_{\max}}{\left\|\bu_h\right\|_\infty} \left(
\left\| \nabla\bw_h\right\|_\infty^2 h\left\|\be_h\right\|_0^2 \vphantom{+ 
\left( \left\|\bu\right\|_\infty + h\left(\left\|\nabla\bu\right\|_\infty + C_{1,\bw_h}\right)\right)
\frac{1}{v_0^{1/2}} \left\| \bz-\bpsi_h\right\|_{*}}
\right.
\nonumber\\
&\left.{}+
\left( \left\|\bu\right\|_\infty + h\left(\left\|\nabla\bu\right\|_\infty + \left\| \nabla\bw_h\right\|_\infty\right)\right)^2
\frac{1}{v_0} \left\| \bz-\bpsi_h\right\|_{*}^2
\right),
\end{align}
 where the constant~$C$ depends on~$C_{\rm tr}$, $c_{\rm{inv}}$
 and the dimension~$d$.
\end{lema}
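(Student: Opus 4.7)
The plan is to bound each of the three contributions $S_j(\bw_h,\bw_h)$, $j=1,2,3$, separately, after factoring out the common $\delta_{\max}/\|\bu_h\|_\infty$ present in the definition of $S$. The two workhorses throughout are the trace inequality \eqref{trace}, used to convert face integrals into element-wise $L^2$-norms, and the inverse inequality \eqref{inv}, used to trade derivatives for inverse powers of $h$. The unifying strategy is twofold: first, decompose the convecting velocity as $\bu_h=\be_h+\bw_h$ inside each differential operator; second, whenever a jump of a derivative of $\bw_h$ remains, use the identity $\bw_h=\bu-\bbeta_h$ with $\bbeta_h={\rm curl}(\bz-\bpsi_h)$, so that jumps of the globally smooth field $\bu$ vanish and only jumps of $\bbeta_h$ remain.

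For $S_1(\bw_h,\bw_h)$, the $H^1_0$-conformity of $\be_h$ gives $\llbracket(\be_h\cdot\nabla)\bw_h\times\bn\rrbracket=\be_h\cdot\llbracket\nabla\bw_h\times\bn\rrbracket$, which I estimate pointwise by $\|\nabla\bw_h\|_\infty|\be_h|$. The trace inequality and \eqref{inv} then reduce the face $L^2$-norm of $\be_h$ to $h^{-1/2}\|\be_h\|_{0,K}$, and the $h^2$ weight in $S_1$ produces exactly the target $\|\nabla\bw_h\|_\infty^2\,h\,\|\be_h\|_0^2$. The remaining $\bw_h$-part rewrites as $\bw_h\cdot\llbracket\nabla\bw_h\rrbracket=-\bw_h\cdot\llbracket\nabla\bbeta_h\rrbracket$ by continuity of $\nabla\bu$; the trace inequality bounds the last jump by $h^{-1/2}\|D^2(\bz-\bpsi_h)\|_K+h^{1/2}\|D^3(\bz-\bpsi_h)\|_K$, and squaring with the $h^2$ weight matches precisely the $j=2,3$ contributions in $\|\bz-\bpsi_h\|_*^2$. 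The elementary inequality $h/(\nu+v_0 h)\le v_0^{-1}$ then converts this into a bound of the form $\|\bw_h\|_\infty^2\,v_0^{-1}\|\bz-\bpsi_h\|_*^2$, and $\|\bw_h\|_\infty$ is in turn controlled by $\|\bu\|_\infty$ via \eqref{w_h_inf}.

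For $S_2$ and $S_3$, the operator ${\cal B}\bw_h={\rm curl}((\bu_h\cdot\nabla)\bw_h)$ and its gradient generate, via the Leibniz rule, cross products of derivatives of $\bu_h$ against derivatives of $\bw_h$, schematically of type $\nabla\bu_h\otimes\nabla\bw_h$ and $\bu_h\otimes D^2\bw_h$ for $S_2$ (and one more derivative for $S_3$). In each factor I again split $\bu_h=(\bu-\bbeta_h)+\be_h$: the $\bu$ and $\nabla\bu$ contributions provide the $\|\bu\|_\infty$ and $h\|\nabla\bu\|_\infty$ factors in the stated bound; the $\nabla\bw_h$ factor that remains outside the jump when the jump falls on $\bu_h$ produces the $h\|\nabla\bw_h\|_\infty$ term, the extra power of $h$ coming from the heavier weights $h^4,h^6$ in $S_2,S_3$ compared to $S_1$; the $\be_h$ contribution is again absorbed by trace plus inverse into the same $\|\nabla\bw_h\|_\infty^2\,h\,\|\be_h\|_0^2$ term; and the $\bbeta_h$ contribution is controlled by trace in terms of derivatives of $\bz-\bpsi_h$ up to fifth order, which is exactly the range $0\le j\le \min(5,k+2)$ appearing in the definition \eqref{estrella} of $\|\cdot\|_*$.

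The main obstacle is the bookkeeping in $S_2$ and $S_3$: after the full Leibniz expansion of ${\rm curl}((\bu_h\cdot\nabla)\bw_h)$ and of $\nabla{\rm curl}((\bu_h\cdot\nabla)\bw_h)$, and after splitting each $\bu_h$-factor into $\bu-\bbeta_h+\be_h$, one must verify that every resulting cross term fits into exactly one of the two patterns identified above, with its $h$-powers aligning so that the conversion $h\le v_0^{-1}(\nu+v_0 h)$ always suffices and every remaining factor is absorbed into $(\|\bu\|_\infty+h\|\nabla\bu\|_\infty+h\|\nabla\bw_h\|_\infty)^2$. Checking that no residual $\bbeta_h$- or $\be_h$-quantity escapes this classification, and that the final sum over faces reassembles into a single $\|\be_h\|_0^2$ and a single $\|\bz-\bpsi_h\|_*^2$ with the correct prefactors, is the most delicate part of the calculation.
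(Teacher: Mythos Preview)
Your strategy is essentially the paper's: split the convecting field $\bu_h$ into a smooth part $\bu$ plus an error, exploit the regularity of $\bu$ so that jumps of $(\bu\cdot\nabla)\bu$, ${\cal B}_{\bu}\bu$ and $\nabla{\cal B}_{\bu}\bu$ vanish (whence the convected $\bw_h$ may be replaced by $-\bbeta_h$ inside the jumps), and control the error part by trace plus inverse inequalities; the only organizational difference is that the paper writes $\bu_h=\bu+(\bu_h-\bu)$ in one step and packages the $\be_h$ and $\bbeta_h$ pieces together via Lemma~\ref{estimations}, whereas you perform the equivalent two-stage split $\bu_h=\bw_h+\be_h$, $\bw_h=\bu-\bbeta_h$.

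One point your sketch glosses over and that is worth making explicit: in $S_3$ the Leibniz expansion of $\nabla{\cal B}_{\bu}\bw_h$ forces terms $\partial^{\balpha}\bu\cdot\nabla\bbeta_h$ with $|\balpha|=2$, and under the assumption $\bu\in H^{\min(4,k+1)}$ the quantity $\|D^2\bu\|_\infty$ is \emph{not} available; the paper deals with this by writing $\partial^{\balpha}\bu=\partial^{\balpha}\bw_h+\partial^{\balpha}\bbeta_h$ and invoking $\|D^2\bw_h\|_\infty\le c_{\rm inv}h^{-1}\|\nabla\bw_h\|_\infty$ (see \eqref{aaver03b}--\eqref{aaver03d}), and this, rather than merely ``the $\nabla\bw_h$ factor remaining outside the jump,'' is an independent source of the $h\|\nabla\bw_h\|_\infty$ contribution in \eqref{eq:cons2}.
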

 \begin{proof}
For the purpose of the present proof, for $\bw=\bu$ or~$\bw=\bu_h -\bu$, we denote by
$$
{\cal B}_{\bw}\bv =\hbox{\rm curl}\left((\bw \cdot \nabla )\bv\right),
$$
and
\begin{eqnarray*}
S_{\bw} (\bv,\bv) &=& \frac{1}{\left\|\bu_h\right\|_\infty} \left(\delta_1\left\|h \llbracket (\bw\cdot\nabla)\bv\times \bn\rrbracket \right\|_{{\cal F}_i}^2 + 
\delta_2\left\|h^2  \llbracket {\cal B}_{\bw}\bv \rrbracket\right\|_{{\cal F}_i}^2
\right.\\
&&\quad\left.+\delta_3\left\|h^3  \llbracket \nabla {\cal B}_{\bw}\bv\rrbracket \right\|_{{\cal F}_i}^2\right).
\end{eqnarray*}
We write
\begin{align}
\label{aver00}
S(\bw_h,\bw_h) &= \left(S(\bw_h,\bw_h) - S_{\bu}(\bw_h,\bw_h)\right) + S_{\bu} (\bw_h,\bw_h)
\nonumber\\
&=S_{\be}(\bw_h,\bw_h) + S_{\bu}(\bw_h,\bw_h).
\end{align}
where
\begin{equation}
\label{be}
\be=\bu_h-\bu.
\end{equation}
Applying the trace inequality~\eqref{trace} we can write
\begin{align}
\label{aaver00}
\left\|h \llbracket (\be\cdot\nabla)\bw_h\times \bn\rrbracket \right\|_{{\cal F}_i}
&\le C_{tr}\left( \left\|h^{1/2} (\be\cdot\nabla)\bw_h\right\|_{h} + \left\|h^{3/2} \nabla (\be\cdot\nabla)\bw_h\right\|_{h}\right)
\nonumber
\\
&\le C_{tr}\left( \left\|h^{1/2} (\be\cdot\nabla)\bw_h\right\|_{h} + \left\|h^{3/2} \nabla \be\cdot\nabla\bw_h\right\|_{h}\right.
\nonumber\\
&\qquad\left.{}
+ \left\|h^{3/2} \be\cdot\hbox{\rm Hess}(\bw_h)\right\|_{h} \right),
\end{align}
where, by $\be\cdot\hbox{\rm Hess}(\bw_h)$ we denote a matrix whose columns are the inner product of~$\be$ with the Hessian of each of the components of~$\bw_h$.
For the last term on the right-hand side above we apply the inverse inequality \eqref{inv} to the second
derivatives of~$\bw_h$ to get
\begin{align*}
\left\|h^{3/2} \be\cdot\hbox{\rm Hess}(\bw_h)\right\|_{h} &\le h^{1/2}\left\|\be\right\|_0c_{\rm inv} \left\| \nabla\bw_h\right\|_\infty
\nonumber\\
&{} \le\left( c_{\rm inv}h^{1/2}\left\|\be_h\right\|_0 + c_{\rm inv}\frac{ h}{v_0^{1/2}} \left\| \bz -\psi_h\right\|_{*}\right)\left\| \nabla\bw_h\right\|_\infty,
\end{align*}
for the second term of the right-hand side of~\eqref{aaver00} we apply~\eqref{029} to $\nabla\be_h$ to get
\begin{align*}
\left\|h^{3/2} \nabla \be\cdot\nabla \bw_h\right\|_{h} &\le 
\left(c_{\rm inv}h^{1/2}\left\|\be_h\right\|_0+\frac{ h}{v_0^{1/2}} \left\| \bz -\bpsi_h\right\|_{*} \right)\left\| \nabla\bw_h\right\|_\infty.
\end{align*}
Arguing similary for the first term on the right-hand side of~\eqref{aaver00} we obtain
\begin{align*}
\label{aaver01}
&\left\|h \llbracket (\be\cdot\nabla)\bw_h\times \bn\rrbracket \right\|_{{\cal F}_i}
\nonumber\\
&\quad\le C_{\rm tr}\left\| \nabla\bw_h\right\|_\infty(2+2c_{\rm inv}) \left( h^{1/2} \left\|\be_h\right\|_0 +  \frac{h}{v_0^{1/2}} \left\|\bz-\bpsi_h\right\|_{*}\right).
\end{align*}
Applying the trace inequaltiy~\eqref{trace} {again} we may write
\begin{eqnarray*}
\label{aaver02a}
\left\|h^2  \llbracket {\cal B}_{\be}\bw_h \rrbracket\right\|_{{\cal F}_i} \le C_{\rm tr} (I_{11}+I_{12}),
\end{eqnarray*}
where
 $I_{11}$ and~$I_{12}$ can be expressed as a sum of terms of the form
\begin{eqnarray*}
\label{alpha-beta}
\kappa_{\boldsymbol\beta} h^{\left|\balpha\right|+\left|{\boldsymbol\beta}\right|+1/2}
\left\| 
\partial^{\balpha} \be\cdot \partial^{\boldsymbol\beta}\nabla w_h
\right\|_h
\end{eqnarray*}
where $w_h$ represents any of the components of~$\bw_h$,
$\left|\balpha\right|+\left|{\boldsymbol\beta}\right|=1$ in the case of~$I_{11}$ and~$\left|\balpha\right|+\left|{\boldsymbol\beta}\right|=2$
in the case of~$I_{12}$ and
\begin{equation}
\label{kappa}
\kappa_{\boldsymbol\beta}=\left\{\begin{array}{lcl}1 & \quad& \hbox{\rm if $\left|{\boldsymbol\beta}\right|+1\le k$},\\
0&&\hbox{\rm otherwise.}\end{array}\right.
\end{equation}
Notice that the presence of~$\kappa_{\boldsymbol\beta}$ reflects the fact that, being $\bw_h$  piecewise polynomial of degree $k$, their derivatives of order $k+1$ are 0.

Applying the inverse inequality,
$$
\left\| \partial^{\boldsymbol\beta}\nabla w_h\right\|_\infty \le c_{\rm inv} h^{-\left|{\boldsymbol\beta}\right|}\left\|\nabla\bw_h\right\|_\infty,
$$
so that
\begin{eqnarray}
\label{I11+I12}
I_{11}+I_{12} \le Cc_{\rm inv}\left\| \nabla\bw_h\right\|_\infty \sum_{\left|\balpha\right|=0}^2 h^{\left|\balpha\right|+1/2}\left\| \partial^{\balpha} \be\right\|_h.
\end{eqnarray}
Consequently, applying~\eqref{028} it follows that
\begin{equation}
\label{aaver02}
\left\|h^2  \llbracket  {\cal B}_{\bw}\bv \rrbracket\right\|_{{\cal F}_i} \le 
CC_{\rm tr} \left\| \nabla\bw_h\right\|_\infty
\left( h^{1/2}\left\|\be_h\right\|_0 + \frac{ h}{v_0^{1/2}} \left\| \bz -\bpsi_h\right\|_{*} \right).
\end{equation}
Observe that the constant~$C$ in~\eqref{I11+I12} depends on the number of terms in $I_{11}$ and~$I_{12}$, which, in turn, depends on the dimension. Consequently, this is also the case of the constant~$C$ in~\eqref{aaver02}, which depends also on~$c_{\rm inv}$.

With respect $\left\|h^3  \llbracket \nabla {\cal B}_{\bw}\bv \rrbracket\right\|_{{\cal F}_i}$, the argument is the same as
with~$\left\|h^2  \llbracket {\cal B}_{\bw}\bv \rrbracket\right\|_{{\cal F}_i}$, with the only difference that, now,
$\left|\balpha\right|+\left|{\boldsymbol\beta}\right|=2,3$. Thus, we conclude that
\begin{equation}
\label{aaver03}
S_{\be} (\bw_h,\bw_h) \le C_2 \left\| \nabla\bw_h\right\|_\infty^2\frac{\delta}{\left\|\bu_h\right\|_\infty} \left( h^{1/2}\left\|\be_h\right\|_0 + \frac{ h}{v_0^{1/2}} \left\| \bz -\bpsi_h\right\|_{*} \right)^2.
\end{equation}
To bound~$S_{\bu}(\bw_h,\bw_h)$ we estimate the different terms separately. Since we are assuming that $\bu\in H^3(\Omega)^d$, it follows that  $(\bu\cdot\nabla)\bu \in H^2(\Omega)$ and, hence, is continuous, so that
$$
\left\|h \llbracket (\bu\cdot\nabla)\bu\times \bn\rrbracket \right\|_{{\cal F}_i}=0,
$$
and, consecuently
\begin{align}
\label{baver00b}
\left\|h \llbracket (\bu\cdot\nabla)\bw_h\times \bn\rrbracket \right\|_{{\cal F}_i}&=
\left\|h \llbracket (\bu\cdot\nabla)\bbeta_h\times \bn\rrbracket \right\|_{{\cal F}_i}
\nonumber\\
&{}\le C\left\|\bu\right\|_\infty\left\|h \llbracket \nabla\bbeta_h\rrbracket \right\|_{{\cal F}_i}.
\end{align}
Applying the trace inequality~\eqref{trace} we have
\begin{align}
\label{baver00c}
\left\|h \llbracket \nabla\bbeta_h\rrbracket \right\|_{{\cal F}_i} &\le
CC_{\rm tr} \left(h^{1/2} \left\| D^2(\bz-\bpsi_h)\right\|_h + h^{3/2} \left\| D^3(\bz-\bpsi_h)\right\|_h\right)
\nonumber\\
&{}\le CC_{\rm tr}\frac{1}{v_0^{1/2}} \left\|\bz-\bpsi_h\right\|_{*},
\end{align}
so that from~\eqref{baver00b},  it follows that
\begin{align}
\label{baver00}
\left\|h \llbracket (\bu\cdot\nabla)\bw_h\times \bn\rrbracket \right\|_{{\cal F}_i}
&{}\le CC_{\rm tr}\left\|\bu\right\|_\infty\frac{1}{v_0^{1/2}} \left\|\bz-\bpsi_h\right\|_{*},
\end{align}
where $C$ depends on the number of elements in $(\bu\cdot\nabla)\bw_h\times \bn$, that is, on the dimension.

Also, since we are assuming that $\bu\in H^3(\Omega)^d$, we have that ${\cal B}_{\bu}\bu\in H^1(\Omega)^d$, which implies (see e.g., \cite[p.~37]{libro_dis})
$$
\left\|h^2  \llbracket {\cal B}_{\bu}\bu \rrbracket\right\|_{{\cal F}_i}=0,
$$
so that
\begin{align}
\label{baver01b}
\left\|h^2  \llbracket {\cal B}_{\bu}\bw_h \rrbracket\right\|_{{\cal F}_i} &=
\left\|h^2  \llbracket {\cal B}_{\bu}\bbeta_h \rrbracket\right\|_{{\cal F}_i}
\nonumber\\
&{} \le Ch\left\| \nabla \bu\right\|_\infty \left\|h \llbracket \nabla\bbeta_h\rrbracket \right\|_{{\cal F}_i}
 + \left\|\bu\right\|_\infty
\left\| h^2  \llbracket \nabla\hbox{\rm curl}(\bbeta_h)\rrbracket \right\|_{{\cal F}_i}.
\end{align}
We notice that $ \left\|h \llbracket \nabla\bbeta_h\rrbracket \right\|_{{\cal F}_i}$ is bounded in~\eqref{baver00c}. Applying the trace inequality~\eqref{trace} 
we have
\begin{align*}
\left\| h^2  \llbracket \nabla\hbox{\rm curl}(\bbeta_h)\rrbracket \right\|_{{\cal F}_i}
&\le 
CC_{\rm tr} \left(h^{3/2} \left\| D^3(\bz-\bpsi_h)\right\|_h + h^{5/2} \left\| D^4(\bz-\bpsi_h)\right\|_h\right)
\nonumber\\
&{}\le CC_{\rm tr}\frac{1}{v_0^{1/2}} \left\|\bz-\bpsi_h\right\|_{*}.
\end{align*}
Thus, from \eqref{baver01b} it follows that
\begin{align}
\label{baver01}
\left\|h^2  \llbracket {\cal B}_{\bu}\bw_h \rrbracket\right\|_{{\cal F}_i} &\le 
CC_{\rm tr} \left(\left\|\bu\right\|_\infty + h\left\|\nabla\bu\right\|_\infty\right)
\frac{1}{v_0^{1/2}} \left\|\bz-\bpsi_h\right\|_{*},
\end{align}
where, as before, $C$ depends on the dimension.

Finally, we have that $\left\|h^3  \llbracket \nabla{\cal B}_{\bu}\bw_h \rrbracket\right\|_{{\cal F}_i}$ can be bounded by a sum of terms of the form
\begin{equation}
\label{terms2}
\kappa_{\boldsymbol\beta} \left\|h^3  \llbracket \partial^{\balpha}\bu\cdot \nabla\partial^{\boldsymbol\beta}\bw_h \rrbracket\right\|_{{\cal F}_i},
\end{equation}
where~$\kappa_{\boldsymbol\beta}$ is defined in~\eqref{kappa}, and~$\left|\balpha\right|+\left|{\boldsymbol\beta}\right|=2$.
We now argue why, for the terms
in~\eqref{terms2}, we have
\begin{equation}
\label{terms3}
\kappa_{\boldsymbol\beta} \left\|h^3  \llbracket \partial^{\balpha}\bu\cdot \nabla\partial^{\boldsymbol\beta}\bw_h \rrbracket\right\|_{{\cal F}_i}=
\kappa_{\boldsymbol\beta} \left\|h^3  \llbracket \partial^{\balpha}\bu\cdot \nabla\partial^{\boldsymbol\beta}\bbeta_h \rrbracket\right\|_{{\cal F}_i}.
\end{equation}
If $k=2$, then $\kappa_{\boldsymbol\beta}=0$ when ${\boldsymbol \beta}=2$, so that in terms with $\kappa_{\boldsymbol\beta}=1$ only up to second order derivatives appear, multiplied by first order derivatives,  so that, if $\left|{\boldsymbol\beta}\right|\le 1$ we have $ \partial^{\balpha}\bu\cdot \nabla\partial^{\boldsymbol\beta}\bu\in H^1(\Omega)^d$; thus (see e.g., \cite[p.~37]{libro_dis}), \eqref{terms3} holds. If $k\ge 3$, then we are assuming that~$\bu\in H^4(\Omega)$,  and then~$ \partial^{\balpha}\bu\cdot \nabla\partial^{\boldsymbol\beta}\bw_h\in H^1(\Omega)$, so that, again, \eqref{terms3} holds.

Now, arguing as before we have for $\left|\balpha\right|\le 1$
\begin{eqnarray*}
\label{aaver03a}
\kappa_{\boldsymbol\beta} \left\|h^3  \llbracket \partial^{\balpha}\bu\cdot \nabla\partial^{\boldsymbol\beta}\bbeta_h \rrbracket\right\|_{{\cal F}_i}
\le CC_{\rm tr} \left(\left\|\bu\right\|_\infty + h\left\|\nabla\bu\right\|_\infty\right)
\frac{1}{v_0^{1/2}} \left\|\bz-\bpsi_h\right\|_{*}.
\end{eqnarray*}
We deal differently with the case $\left|\balpha\right|=2$. Here we write
\begin{equation}
\label{aaver03b}
\left\|h^3  \llbracket \partial^{\balpha}\bu\cdot \nabla \bbeta_h \rrbracket\right\|_{{\cal F}_i}\le 
\left\|h^3  \llbracket \partial^{\balpha}\bbeta_h\cdot \nabla \bbeta_h \rrbracket\right\|_{{\cal F}_i} +
\left\|h^3  \llbracket \partial^{\balpha}\bw_h\cdot \nabla \bbeta_h \rrbracket\right\|_{{\cal F}_i}
\end{equation}
For the first term on the right-hand side of \eqref{aaver03b} since $\nabla \bw_h$ is a piecewise polynomial and $\nabla\bu$ is continuous,
we have that $\bbeta_h$ has only jump discontinuities, so that we can write 
\begin{align*}
\left\|h^3  \llbracket \partial^{\balpha}\bbeta_h\cdot \nabla \bbeta_h \rrbracket\right\|_{{\cal F}_i}&\le
h\left\| \nabla \bbeta_h\right\|_\infty\left\|h^2  \llbracket \partial^{\balpha}\bbeta_h \rrbracket\right\|_{{\cal F}_i}
\nonumber\\
&{}\le
h\left(\left\| \nabla \bu\right\|_\infty+\left\| \nabla\bw_h\right\|_\infty\right)\left\|h^2  \llbracket \partial^{\balpha}\bbeta_h \rrbracket\right\|_{{\cal F}_i},
\end{align*}
so that applying the trace inequality~\eqref{trace}
we get
\begin{align}
\label{aaver03c}
\left\|h^3  \llbracket \partial^{\balpha}\bbeta_h\cdot \nabla \bbeta_h \rrbracket\right\|_{{\cal F}_i}&\le CC_{\rm tr}
h\left(\left\| \nabla \bu\right\|_\infty+\left\| \nabla\bw_h\right\|_\infty\right)\left(h^{3/2} \left\|D^{2}\bbeta_h\right\|_h
+h^{5/2} \left\|D^{3}\bbeta_h\right\|_h\right)
\nonumber\\
&{}\le  CC_{\rm tr}
h\left(\left\| \nabla \bu\right\|_\infty+\left\| \nabla\bw_h\right\|_\infty\right) \frac{1}{v_0}^{1/2} \left\| \bz-\bpsi_h\right\|_{*}.
\end{align}
For the second term on the righ-hand side of~\eqref{aaver03b} we write
\begin{align*}
\left\|h^3  \llbracket \partial^{\balpha}\bw_h\cdot \nabla \bbeta_h \rrbracket\right\|_{{\cal F}_i}
&\le Ch^2\left\| \partial^{\balpha}\bw_h\right\|_\infty \left\|h\nabla \bbeta_h \rrbracket\right\|_{{\cal F}_i}
\nonumber\\
&\le Cc_{\rm inv}h\left\| \nabla \bw_h\right\|_\infty\left\|h\nabla \bbeta_h \rrbracket\right\|_{{\cal F}_i}.
\end{align*}
Thus applying again the trace inequality~\eqref{trace}
\begin{align}
\label{aaver03d}
\left\|h^3  \llbracket \partial^{\balpha}\bw_h\cdot \nabla \bbeta_h \rrbracket\right\|_{{\cal F}_i}
&\le Cc_{\rm inv}h\left\| \nabla\bw_h\right\|_\infty C_{\rm tr} 
\left(h^{1/2} \left\|D^{1}\bbeta_h\right\|_h
+h^{3/2} \left\|D^{2}\bbeta_h\right\|_h\right)
\nonumber\\
&{}\le Cc_{\rm inv}h\left\| \nabla\bw_h\right\|_\infty C_{\rm tr} \frac{1}{v_0^{1/2}}  \left\| \bz-\bpsi_h\right\|_{*}.
\end{align}
This, together with~\eqref{aaver03b}, implies that
\begin{eqnarray}
\label{aaver04}
\left\|h^3  \llbracket \nabla{\cal B}_{\bu}\bw_h \rrbracket\right\|_{{\cal F}_i}
\le 
CC_{\rm tr}\left( \left\|\bu\right\|_\infty + h\left(\left\|\nabla\bu\right\|_\infty + \left\| \nabla\bw_h\right\|_\infty\right)\right)
\frac{1}{v_0^{1/2}} \left\| \bz-\bpsi_h\right\|_{*},
\end{eqnarray}
where, as before, $C$ depends on the dimension and~$c_{\rm inv}$.
Consequently, from~\eqref{baver00}, \eqref{baver01}, and \eqref{aaver04} it follows that
\begin{equation}
\label{Suwhwh}
S_{\bu}(w_h,w_h) \le C\frac{\delta_{\max}}{\left\|\bu_h\right\|_\infty}\left( \left\|\bu\right\|_\infty + h\left(\left\|\nabla\bu\right\|_\infty + \left\| \nabla\bw_h\right\|_\infty\right)\right)^2
\frac{1}{v_0} \left\| \bz-\bpsi_h\right\|_{*}^2,
\end{equation}
which, together with \eqref{aver00} and~\eqref{aaver03}
finishes the proof.
 \end{proof}
As we stated in the introduction, it is well known that the standard Galerkin method based on Scott-Vogelious elements 
has a rate of convergence of order $k$ with constants that do not depend neither on the Reynolds number neither on the pressure. Thanks to the previous consistency lemma for the added stabilization term the same result holds for the stabilized method.
In the next proposition, as a first step to achieve the full order convergence of rate $k+1/2$ we prove that the stabilized
Scott-Vogelious method has at least rate of convergence $k$. This result will be used to obtained some a priori bounds
for the error that will be used in the proof of the main theorem.
\begin{proposition}\label{Prop0} Assume that
\begin{equation}
\label{u0}
\left\| \bu_h(t)\right\|_\infty\ge u_0>0,\qquad 0\le t\le T,
\end{equation}
and that
\begin{equation}
\label{cond_e0a}
\left\| \be_h(0)\right\|_0\le c_0h^k,
\end{equation}
for some constant~$c_0.$
Then, there exist a constant~$C_0$ such that the following inequality holds for $t\in [0,T]$
\begin{align}
\left\| \be_h(t) \right\|_0^2 
\le
C_0h^{2k}
\end{align}
\end{proposition}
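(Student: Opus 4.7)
The plan is to derive from \eqref{eq_error_3} a differential inequality of the form $\frac{d}{dt}\|\be_h\|_0^2\le C_1\|\be_h\|_0^2+C_2 h^{2k}$ with constants $C_1,C_2$ independent of~$1/\nu$, and then apply Gronwall. So the work is bounding each right-hand side term of \eqref{eq_error_3} by $\|\be_h\|_0^2$ (absorbable) plus $O(h^{2k})$, while keeping everything convection-robust.

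The easier terms first. Using the definition \eqref{estrella} together with \eqref{cota_Ph_z}, and observing that $D^{k+2}\bpsi_h=0$ elementwise since $\bpsi_h\in\bZ_h^{k+1}$, each summand satisfies
\[
h^{2j-4}\|D^j(\bz-\bpsi_h)\|_h^2 = h^{-4}\|h^j D^j(\bz-\bpsi_h)\|_h^2 \le C h^{2k}\|\bz\|_{k+2}^2,
\]
which yields $\|\bz-\bpsi_h\|_*^2\le C(\nu+v_0 h) h^{2k}\le C h^{2k}$. Hence the last summand of \eqref{eq_error_3}, and trivially the $CTh^{2k+2}\|\bu_t\|_{k+1}$ term, are $O(h^{2k})$. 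The consistency term $S(\bw_h,\bw_h)$ is handled by Lemma~\ref{cons2}: assumption \eqref{u0} gives $1/\|\bu_h\|_\infty\le 1/u_0$; the coefficients $\|\bu\|_\infty,\|\nabla\bu\|_\infty,\|\nabla\bw_h\|_\infty$ are controlled via \eqref{C1wh}--\eqref{Cwh} and the regularity of $\bu$; the first contribution $\|\nabla\bw_h\|_\infty^2 h\|\be_h\|_0^2$ becomes $C h\|\be_h\|_0^2$ (of the absorbable form), and the second contribution is $O(h^{2k})$ by the bound on $\|\bz-\bpsi_h\|_*^2$ just derived.

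The delicate term is the nonlinear contribution $2(\bu_h\cdot\nabla\bbeta_h,\be_h)$, and this is where convection-robustness is at risk: integrating by parts (using $\nabla\cdot\bu_h=0$) would force a bound $\le\|\bu_h\|_\infty\|\nabla\be_h\|_0\|\bbeta_h\|_0$ and Young's inequality would then generate a $\|\bu_h\|_\infty^2/\nu$ factor. Instead, I would split $\bu_h=\bw_h+\be_h$ and keep both factors of $\bbeta_h$ differentiated only once. For the linear piece, Cauchy--Schwarz gives
\[
|(\bw_h\cdot\nabla\bbeta_h,\be_h)|\le \|\bw_h\|_\infty\|\nabla\bbeta_h\|_0\|\be_h\|_0\le C\|\be_h\|_0^2 + C\|\bw_h\|_\infty^2\|\nabla\bbeta_h\|_0^2,
\]
and since $\bbeta_h=\mathrm{curl}(\bz-\bpsi_h)$, \eqref{cota_Ph_z} with $|\balpha|=2$ gives $\|\nabla\bbeta_h\|_0\le C\|D^2(\bz-\bpsi_h)\|_h\le Ch^k\|\bz\|_{k+2}$, which is $O(h^{2k})$ overall. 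The remaining piece
\[
|(\be_h\cdot\nabla\bbeta_h,\be_h)|\le \|\nabla\bbeta_h\|_\infty\|\be_h\|_0^2\le(\|\nabla\bu\|_\infty+C_{1,\bw_h})\|\be_h\|_0^2
\]
is absorbed into the $C_1\|\be_h\|_0^2$ term.

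Putting the estimates together in \eqref{eq_error_3} and dropping the nonnegative dissipation $\nu\|\nabla\be_h\|_0^2+S(\be_h,\be_h)$ leaves
\[
\frac{d}{dt}\|\be_h\|_0^2 \le C_1\|\be_h\|_0^2+C_2 h^{2k},
\]
with $C_1,C_2$ depending on problem data, $T$, $v_0$, $\delta_{\max}$, $u_0$, $\|\bu\|_\infty$, $\|\nabla\bu\|_\infty$, $C_{\bw_h}$, $C_{1,\bw_h}$ and $\|\bu_t\|_{k+1}$, but not on $1/\nu$. Gronwall's inequality together with the initial bound \eqref{cond_e0a} yields $\|\be_h(t)\|_0^2\le e^{C_1 T}(c_0^2+C_2 T)h^{2k}=:C_0 h^{2k}$ for all $t\in[0,T]$. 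The main obstacle was indeed the nonlinear term: the key insight is to avoid integration by parts and to exploit instead that $\bbeta_h$ is one curl of $\bz-\bpsi_h$, so that one derivative of $\bbeta_h$ can be estimated in $L^2$ of order $h^k$ and the other in $L^\infty$ of order $1$, both without any factor of $1/\nu$.
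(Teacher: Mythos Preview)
Your proof is correct and follows essentially the same approach as the paper. The only cosmetic difference is the starting point: the paper restarts from \eqref{eq_error} and decomposes the nonlinear term as $(\bbeta_h\cdot\nabla\bu,\be_h)+(\bw_h\cdot\nabla\bbeta_h,\be_h)-(\be_h\cdot\nabla\bw_h,\be_h)$, whereas you continue from \eqref{eq_error_3} and split $\bu_h=\bw_h+\be_h$ in $(\bu_h\cdot\nabla\bbeta_h,\be_h)$; since $-(\be_h\cdot\nabla\bu,\be_h)+(\be_h\cdot\nabla\bbeta_h,\be_h)=-(\be_h\cdot\nabla\bw_h,\be_h)$, the two decompositions are algebraically identical, and the remaining estimates (the direct bound on $(\bw_h\cdot\nabla\bbeta_h,\be_h)$ via $\|\bw_h\|_\infty\|\nabla\bbeta_h\|_0\|\be_h\|_0$, the use of Lemma~\ref{cons2} for $S(\bw_h,\bw_h)$, and Gronwall) coincide.
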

\begin{proof}
Starting with the error equation \eqref{eq_error}
\begin{eqnarray*}
&&\frac{1}{2}\frac{d}{dt}\|\be_h\|_0^2+\nu\|\nabla \be_h\|_0^2+S(\be_h,\be_h)=(\bu\cdot\nabla \bu-\bu_h\cdot \nabla \bu_h,\be_h)+S(\bw_h,\be_h)\nonumber\\
&&\quad +(\bbeta_{h,t},\be_h)+\nu(\nabla \bbeta_h,\nabla \be_h),
\end{eqnarray*}
we bound the nonlinear term in a differente way as before. We first observe that using the skew-symmetric property of the bilinear term we get
\begin{eqnarray*}
(\bu\cdot\nabla \bu-\bu_h\cdot \nabla \bu_h,\be_h)=(\bbeta_h\cdot \nabla \bu,\be_h)+(\bw_h\cdot\nabla \bbeta_h,\be_h)-(\be_h\cdot\nabla \bw_h,\be_h)
\end{eqnarray*}
And then
\begin{eqnarray*}
|(\bu\cdot\nabla \bu-\bu_h\cdot \nabla \bu_h,\be_h)|&\le&\|\bbeta_h\|_0\|\nabla \bu\|_\infty\|\be_h\|_0+
\|\bw_h\|_\infty\|\nabla \bbeta_h\|_0\|\be_h\|_0\\
&&\quad+\|\be_h\|_0^2\|\nabla \bw_h\|_\infty\\
&\le&\left(\frac{\|\nabla \bu\|_\infty}{2}+\|\nabla \bw_h\|_\infty+\frac{1}{2T}\right)\|\be_h\|_0^2\\
&&\quad+\frac{\|\nabla\bu\|_\infty}{2}\|\bbeta_h\|_0^2+\frac{T\|\bw_h\|_\infty^2}{2} \|\nabla\bbeta_h\|_0^2.
\end{eqnarray*}
Arguing as in \eqref{eq_error_3} we get
\begin{eqnarray*}
&&\frac{d}{dt}\|\be_h\|_0^2+{\nu}\|\nabla \be_h\|_0^2+S(\be_h,\be_h)\le\left(\|\nabla \bu\|_\infty+2\|\nabla \bw_h\|_\infty+\frac{1}{T}\right){\|\be_h\|_0^2}\nonumber\\
&&\quad+S(\bw_h,\bw_h)+CTh^{2k+2}\|\bu_t\|_{k+1}+C\left(\frac{h	}{v_0}\left\|\nabla\bu\right\|_\infty+1\right)\|\bz-\bpsi_h\|_*^2
\\
&&\quad+C T\|\bw_h\|_\infty^2\|\nabla \bbeta_h\|_0^2.
\end{eqnarray*}
Applying now Lemma \ref{cons2} we can bound
\begin{eqnarray}\label{eq:Sww}
S(\bw_h,\bw_h)\le \left(C\frac{\delta_{\rm max}}{u_0}\|\nabla \bw_h\|_\infty^2 h\right)\|\be_h\|_0^2
+\frac{C_{\bu,\bw_h}}{v_0}\left\| \bz-\bpsi_h\right\|_{*}^2,
\end{eqnarray}
where
\begin{equation}\label{Cuw}
C_{\bu,\bw_h}=\left( \left\|\bu\right\|_{L^\infty(0,T,L^\infty)} + h\left(\left\|\nabla\bu\right\|_{L^\infty(0,T,L^\infty)} + C_{1,\bw_h}\right)\right)^2.
\end{equation}
Denoting by
\begin{eqnarray*}
\tilde L_h&=&\left(\|\nabla \bu\|_\infty+2\|\nabla \bw_h\|_\infty+\frac{1}{T}\right)+\left(C\frac{\delta_{\rm max}}{\bu_0}\|\nabla \bw_h\|_\infty^2 h\right)\\
&\le&\hat L_h:=\left(\|\nabla \bu\|_{L^\infty(0,T,L^\infty)}+2C_{1,\bw_h}+\frac{1}{T}\right)+\left(C\frac{\delta_{\rm max}}{\bu_0}C_{1,\bw_h}^2 h\right)
\end{eqnarray*}
we reach
\begin{eqnarray*}
&&\frac{d}{dt}\|\be_h\|_0^2+{\nu}\|\nabla \be_h\|_0^2+S(\be_h,\be_h)\le\hat L_h{\|\be_h\|_0^2}\nonumber\\
&&\quad+CTh^{2k+2}\|\bu_t\|_{k+1}+C_{\bu,\bw_h}^1\|\bz-\bpsi_h\|_*^2+C TC_{\bw_h}^2\|\nabla \bbeta_h\|_0^2,
\end{eqnarray*}
where
$$
C^1_{\bu,\bw_h}=C\left(\frac{h	}{v_0}\left\|\nabla\bu\right\|_{L^\infty(0,T,L^\infty)}+1+\frac{C_{\bu,\bw_h}}{v_0}\right).
$$
Applying Gronwall's Lemma we get
\begin{align}\label{prop1_k}
\left\|\be_h(t)\right\|_0^2 \le& \exp\left(\hat L_h t\right)\left(\left\|\be_h(0)\right\|_0^2 + CTh^{2k+2}\int_0^T \exp(-\hat L_hs) \left\|\bu_t\right\|_{k+1}^2\,dt\right.
\nonumber\\
& \left. \vphantom{\int_0^T }
{} + TC^1_{\bu,\bw_h} \max_{0\le t\le T} \left\| \bz -\bpsi_h\right\|_{*}^2 + CT^2C_{\bw_h}^2 \left\|\nabla \bbeta_h\right\|_{L^\infty(0,T,L^2)}^2
\right),
\end{align}
so that applying \eqref{cota_Ph_z} the proof is finished.
\end{proof}
\begin{remark} Notice that assumption~\eqref{u0} is not restrictive from the practical point of view. If $\left\|\bu_h\right\|_\infty$ approaches 0,
then the stabilization terms $S_1$, $S_2$ and~$S_3$ can be redefined so that the factor $\left\|\bu_h\right\|_{\infty}^{-1}$ is replaced by
$\max(u_0,\left\|\bu_h\right\|_{\infty})^{-1}$, for some $u_0>0$.
\end{remark}
\begin{remark} 
Let us observe that the only term of order $2k$ in \eqref{prop1_k} is the last one that comes from the
bound of $(\bw_h\cdot\nabla \bbeta_h,\be_h)$. To improve the rate of convergence we need to apply a different bound for this
term or, analogously, for the term $(\bu_h\cdot\nabla \bbeta_h,\be_h)$. This is carried out in next lemma. To be
able to improve the rate of convergence the added stabilization terms are essential in the error bound of this term. 
\end{remark}
\begin{lema}\label{cons1} (Consistency of the nonlinear term). 
\rm Let $C_{\bu_h}>0$ and~$\Gamma>0$ be such that
\begin{equation}
\label{cota_uh}
\left\| \bu_h\right\|_{\infty} \le C_{\bu_h}
\end{equation}
and
\begin{equation}
\label {threshold}
\left\| \be_h\right\|_0 \le \Gamma h^2,
\end{equation}
hold. Then, for any~$\varepsilon>0$, the following bound holds,
\begin{align}
\label{cota23}
\left| ((\bu_h\cdot\nabla)\bbeta_h,\be_h)\right| \le& 
 \frac{C_{\bu,h}^2}{v_0} \left\|\be_h\right\|_0^2 + \varepsilon S(\be_h,\be_h)
\nonumber\\
&\quad{} +\left(\left(1+C_{\ref{lema_41}}^2\right)\frac{C_{\bu_h}}{2\varepsilon\delta_{\min} v_0}+C_1\right)
\left\| \bz-\bpsi_h\right\|_{*}^2,
\end{align}
where $C_{2,1}$ is the constant in Lemma~\ref{lema_41},
\begin{equation}
\label{C2uh}
C_{\bu,h}=c_{\rm inv}h^{(3-d))/2} \left(\Gamma + Ch^{k-1}  \| \bz\|_{k+2}+ C\|\bu\|_3h\right),
\end{equation}
if $k=2,3$, $C_{\bu,h}=0$ and $C_1=0$. Otherwise, $C_1$ is a constant which depends only on the dimension~$d$, $c_{\rm inv}$, $C_{\ref{lema_41}}$ and~the constant $C_{\rm tr}$ in~\eqref{trace}.
\end{lema}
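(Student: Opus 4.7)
The plan is to exploit the CIP stabilization $S(\be_h,\be_h)$ through Lemma~\ref{lema_41}, which lets us approximate a piecewise polynomial by a $C^1$ function in $\bZ_h^m$ with approximation error controlled by the interior face jumps that constitute $S_1$--$S_3$. This is the mechanism that provides the extra $h^{1/2}$ over the naive bound $\|\bu_h\|_\infty\|\nabla\bbeta_h\|_0\|\be_h\|_0$, which by itself only delivers the suboptimal order obtained in Proposition~\ref{Prop0}.

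I first split $\bbeta_h=\bu-\bw_h$, which gives
\[
((\bu_h\cdot\nabla)\bbeta_h,\be_h) = ((\bu_h\cdot\nabla)\bu,\be_h) - ((\bu_h\cdot\nabla)\bw_h,\be_h).
\]
The first summand is ``smooth'': bound it directly by Cauchy--Schwarz after writing $\bu_h=\bu+\bbeta_h-\be_h$, and estimate the discrete factors in $L^\infty$ using the inverse inequality~\eqref{inv}; then $\|\be_h\|_\infty\le c_{\rm inv}h^{-d/2}\|\be_h\|_0$ together with the hypothesis~\eqref{threshold} produces the $\Gamma$ piece of~$C_{\bu,h}$, the identity $\bbeta_h={\rm curl}(\bz-\bpsi_h)$ combined with~\eqref{cota_Ph_z} produces the $h^{k-1}\|\bz\|_{k+2}$ piece, and $\|\bu\|_3$ produces the last one. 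After Young's inequality, this contribution gets absorbed into~$\tfrac{C_{\bu,h}^2}{v_0}\|\be_h\|_0^2$.

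For the second summand, $(\bu_h\cdot\nabla)\bw_h\in\bD_h^{2k-1}$, so Lemma~\ref{lema_41} yields $\bm_h\in\bZ_h^{2k-1}$ with
\[
\|h^{3/2}((\bu_h\cdot\nabla)\bw_h-\bm_h)\|_h\le C_{\ref{lema_41}}\left(\|h^2\llbracket(\bu_h\cdot\nabla)\bw_h\rrbracket\|_{{\cal F}^i}+\|h^3\llbracket\nabla(\bu_h\cdot\nabla)\bw_h\rrbracket\|_{{\cal F}^i}\right).
\]
Decomposing $((\bu_h\cdot\nabla)\bw_h,\be_h)=((\bu_h\cdot\nabla)\bw_h-\bm_h,\be_h)+(\bm_h,\be_h)$, the first piece is estimated by Cauchy--Schwarz followed by Young's inequality, producing exactly $\varepsilon S(\be_h,\be_h)+\tfrac{(1+C_{\ref{lema_41}}^2)C_{\bu_h}}{2\varepsilon\delta_{\min}v_0}\|\bz-\bpsi_h\|_*^2$, provided the jumps of $(\bu_h\cdot\nabla)\bw_h$ on the right are identified with those controlled by $S(\be_h,\be_h)$. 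To do so I would use $\bw_h=\bu_h-\be_h$ and the continuity of~$(\bu_h\cdot\nabla)\bu$ (which follows from the smoothness of~$\bu$) to rewrite the jumps in terms of $\llbracket(\bu_h\cdot\nabla)\be_h\rrbracket$ plus approximation-error contributions. The remaining piece $(\bm_h,\be_h)$ is then handled by the $C^1$ regularity of~$\bm_h$ and contributes the~$C_1\|\bz-\bpsi_h\|_*^2$ term.

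The main technical obstacle is precisely this last identification: the jumps that Lemma~\ref{lema_41} provides are \emph{full} jumps, whereas $S_1$ uses only tangential jumps $\llbracket(\bu_h\cdot\nabla)\bu_h\times\bn\rrbracket$ and $S_2$, $S_3$ use jumps of the \emph{curl} $\mathcal{B}\bu_h$. Reconciling the missing components will require exploiting the divergence-free character of~$\bu_h$ together with arguments analogous to those appearing in the proof of Lemma~\ref{cons2}. The case distinction for $k=2,3$ (where $C_{\bu,h}=C_1=0$) versus $k\ge 4$ is expected to arise naturally because for $k=2,3$ the relevant higher-order derivatives of~$\bw_h$ vanish, so that several contributions disappear and the estimate collapses to its simpler form.
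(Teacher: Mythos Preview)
Your approach has a genuine structural gap and misses the mechanism by which the stabilization actually enters.

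First, the split $\bbeta_h=\bu-\bw_h$ is unhelpful: neither $((\bu_h\cdot\nabla)\bu,\be_h)$ nor $((\bu_h\cdot\nabla)\bw_h,\be_h)$ is individually small. In particular, writing $\bu_h=\bu+\be_h-\bbeta_h$ in the first summand still leaves the term $((\bu\cdot\nabla)\bu,\be_h)$, which is $O(\|\be_h\|_0)$ and carries no factor of~$h^{k+1/2}$; the $\Gamma$, $h^{k-1}\|\bz\|_{k+2}$, $\|\bu\|_3$ pieces you want to assemble into~$C_{\bu,h}$ cannot arise from this term. Second, your use of Lemma~\ref{lema_41} pairs $(\bu_h\cdot\nabla)\bw_h-\bm_h$ with~$\be_h$; but $\be_h$ has no orthogonality to~$\bZ_h$, so the leftover $(\bm_h,\be_h)$ is simply uncontrolled. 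The ``$C^1$ regularity of~$\bm_h$'' gives you nothing here.

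What the paper does is different in kind. It first uses skew-symmetry to rewrite $((\bu_h\cdot\nabla)\bbeta_h,\be_h)=-((\bu_h\cdot\nabla)\be_h,\bbeta_h)$, and then exploits $\bbeta_h={\rm curl}(\bz-\bpsi_h)$ via element-wise integration by parts of the curl. This produces precisely the decomposition
\[
|((\bu_h\cdot\nabla)\be_h,\bbeta_h)|\le |\langle\llbracket(\bu_h\cdot\nabla)\be_h\times\bn\rrbracket,\bz-\bpsi_h\rangle_{{\cal F}^i}|+|({\cal B}\be_h,\bz-\bpsi_h)_h|,
\]
so the tangential jumps of~$S_1$ and the curl operator~${\cal B}$ of~$S_2,S_3$ appear \emph{automatically}; the obstacle you flag about ``full versus tangential/curl jumps'' never arises. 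Moreover, the pairing is now with $\bz-\bpsi_h$, which \emph{is} orthogonal to~$\bZ_h$ (since $\bpsi_h=P_h\bz$), and this is exactly what allows subtracting an arbitrary $\bm_h\in\bZ_h$ from~${\cal B}\be_h$ before invoking Lemma~\ref{lema_41}. Finally, the $k=2,3$ versus $k\ge 4$ distinction is not about derivatives of~$\bw_h$ vanishing; it is that ${\cal B}\be_h={\rm curl}((\bu_h\cdot\nabla)\be_h)$ has polynomial degree $2k-2$, which lies in $\bD_h^{k+1}$ only for $k\le 3$. For $k\ge 4$ one must replace~$\bu_h$ in~${\cal B}$ by a \emph{quadratic} Stokes projection $\bs_h$, defining ${\cal B}_h\be_h\in\bD_h^{k+1}$; the difference $({\cal B}-{\cal B}_h)\be_h$ is then controlled using the a~priori bound~\eqref{threshold}, and this is where the constant~$C_{\bu,h}$ and its three ingredients actually originate.
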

\begin{proof}
As in~\cite{barren_cip}, we write
\begin{equation}
\label{segundo_paso_a}
 \left| ((\bu_h\cdot\nabla)\be_h,\bbeta_h)\right|  \le 
\left|\langle \llbracket (\bu_h\cdot\nabla)\be_h \times {\bn}\rrbracket, \bz-\bpsi_h\rangle_{{\cal F}^i}\right| +\left|({\cal B}\be_h,\bz-\bpsi_h)_h\right|.
\end{equation}
For the first term on the right-hand side above, we have
\begin{align*}
\left|\langle \llbracket (\bu_h\cdot\nabla)\be_h \times {\bn}\rrbracket, \bz-\bpsi_h\rangle_{{\cal F}^i}\right|&{}\le
S_1(\be_h,\be_h)^{1/2}\frac{\|\bu_h\|_\infty^{1/2}}{h} \left\| \bz -\bpsi_h\right\|_{{\cal F}^i}
\nonumber\\
&{}\le \frac{\epsilon}{2} S(\be_h,\be_h) + \frac{C_{\bu_h}}{2\varepsilon\delta_{\min} h^2} \left\| \bz -\bpsi_h\right\|_{{\cal F}^i}^2
\nonumber\\
&{}\le  \frac{\epsilon}{2} S(\be_h,\be_h)  + \frac{C_{\bu_h}}{2\varepsilon\delta_{\min} v_0}\left\| \bz-\bpsi_h\right\|_{*}^2,
 \end{align*}
 where, in the last inequality, we have applied the trace inequality~\eqref{trace}.
 For the second term on the right-hand side of~\eqref{segundo_paso_a}, we now distinguish two cases.
 \medskip
 
 \noindent{\it Case $k=2,3$}
 
 We notice that, if $k=2$, ${\cal B}\be_h\in {\boldsymbol D}_h^2$ and, if $k=3$, ${\cal B}\be_h\in {\boldsymbol D}_h^4$ so that,
 for $k=2,3$ we have  ${\cal B}\be_h\in {\boldsymbol D}_h^{r+1}$ and we can argue as in~\cite{barren_cip}. Being~$\bpsi_h$ the orthogonal projection of~$\bz$ onto~${\boldsymbol Z}_h$ we have that~$\bz -\bpsi_h$ is orthogonal to any~$\bm_h\in{\boldsymbol Z}_h$, so that we can write
 \begin{align}
 \label{tercer_paso_10}
 \left|({\cal B}\be_h,\bz-\bpsi_h)_h\right| &=  \left|({\cal B}\be_h - \bm_h,\bz-\bpsi_h)_h \right|
 \\
 &{}\le \inf_{\bm\in{\boldsymbol Z}_h} \left\|h^{3/2} (({\cal B}\be_h - \bm_h)\right\|_h \left\|h^{-3/2} (\bz-\bpsi_h)\right\|_h
 \nonumber
 \\
 &{} \le C_{\ref{lema_41}}\left(\left\| h^2\llbracket {\cal B}\be_h\rrbracket\right\|_{{\cal F}^i} + \left\| h^3\llbracket \nabla {\cal B}\be_h\rrbracket\right\|_{{\cal F}^i}\right)
  \left\|h^{-3/2} (\bz-\bpsi_h)\right\|_h
  \nonumber
 \end{align}
 where, in the last inequality, we have applied Lemma \ref{lema_41}.  Thus, we have
   \begin{align}
 \label{tercer_paso_12}
 \left|({\cal B}\be_h,\bz-\bpsi_h)_h\right| &\le  \frac{\epsilon}{2} S(\be_h,\be_h) +C_{\ref{lema_41}}^2 \frac{C_{\bu_h}}{2\varepsilon\delta_{\min} h^3}  \left\| \bz-\bpsi_h\right\|_h^2
 \nonumber\\
 &{}\le   \frac{\epsilon}{2} S(\be_h,\be_h) + C_{\ref{lema_41}}^2\frac{C_{\bu_h}}{2\varepsilon\delta_{\min} v_0}   \left\| \bz-\bpsi_h\right\|_{*}^2
 \end{align}
 
\medskip
 \noindent{\it Case $k\ge 4$}. We argue as in~\cite{barren_cip} and we write
 \begin{equation}
 \label{decomp1}
 \left|({\cal B}\be_h,\bz-\bpsi_h)_h\right|\le  \left|(({\cal B}\be_h-{\cal B}_h)\be_h,\bz-\bpsi_h)_h\right| + \left|({\cal B}_h\be_h,\bz-\bpsi_h)_h\right|
\end{equation}
where
 $$
 {\cal B}_h\bw\mid_K\equiv \hbox{\rm curl}
 ((\bs_h
 \cdot\nabla) \bw_K),
 \qquad\forall K\in~\mathcal{T}_{h},
 $$
 where $\bs_h$ is the modified stokes projection onto quadratic finite elements.
 Observe then that ${\cal B}_hw_h \in D^{r+1}_h$. 
 Applying inverse inequality \eqref{inv} we have
\begin{align}
\label{u-sh-0}
h^{3/2}\left\| \bu_h-\bs_h\right\|_\infty &\le c_{\rm inv} h^{(3-d)/2}\| \bu_h-\bs_h\|_0
\nonumber\\
&\le c_{\rm inv} h^{(3-d)/2}\left( \|\bu_h-\bw_h\|_0+\| \bw_h-\bu \|_0+ \| \bu -\bs_h\|_0\right)
\nonumber\\
&\le c_{\rm inv}h^{(3-d)/2} \left(\Gamma + Ch^{r-1}  \| \bz\|_{r+2}+ C\|\bu\|_3h\right)h^2,
\nonumber\\
&= C_{\bu,h} h^2,
\end{align}
and
\begin{align}
\label{u-sh-1}
h^{3/2}\left\|\nabla(\bu_h-\bs_h)\right\|_\infty &\le c_{\rm inv} h^{(1-d)/2}\| \bu_h-\bs_h\|_0
\nonumber\\
&\le c_{\rm inv}h^{(3-d))/2} \left(\Gamma + Ch^{r-1}  \| \bz\|_{r+2}+ C\|\bu\|_3h\right)h\nonumber\\
&= C_{\bu,h} h.
\end{align}
For $K\in~\mathcal{T}_{h}$ we also have
\begin{align}
\label{u-sh-2}
h^{5/2}\left\|\hbox{\rm Hess}(\bu_h-\bs_h)\right\|_{\infty,K} &\le c_{\rm inv} h^{(1-d)/2}\| \bu_h-\bs_h\|_{0}
\nonumber\\
&\le c_{\rm inv}h^{(3-d))/2} \left(\Gamma + Ch^{r-1}  \| \bz\|_{r+1}+ C\|\bu\|_3h\right)h\nonumber\\
&= C_{\bu,h} h.
\end{align}
Consecuently, applying \eqref{u-sh-0}, \eqref{u-sh-1} and inverse inequality, for the first term on the right-hand side of~\eqref{decomp1} we obtain
\begin{align}
\label{paso_22}
\left| \left(({\cal B}-{\cal B}_h)\be_h,\bz-\bpsi_h\right)_h\right| &= \left| \left(h^{3/2}({\cal B}-{\cal B}_h)\be_h,h^{-3/2}(\bz-\bpsi_h)\right)_h\right|
\nonumber\\
&{}\le 
C_{\cal B}C_{\bu,h}c_{\rm inv}
\left\| \be_h\right\|_0\frac{\| \bz -\bpsi_h\|_{*}}{v_0^{1/2}}
\nonumber
\\
&{}\le  \frac{C_{\bu,h}^2}{2v_0} \left\| \be_h\right\|^2
+ \frac{c_{\rm inv}^2C_{\cal B}^2}{2} \| \bz -\bpsi_h\|_{*}^2.
\end{align}
(Observe that the constant~$C_{\cal B}$  above depends only on the number of terms in~${\cal B}$, which, in turn, depends on the dimension).
Also, for $({\cal B}_h\be_h,\bz-\bpsi_h)_h$ we write
\begin{align}
 \label{tercer_paso_20}
 \left|({\cal B}_h\be_h,\bz-\bpsi_h)_h\right| &=  \left|({\cal B}_h\be_h - \bm_h,\bz-\bpsi_h)_h \right|
 \\
 &{}\le \inf_{\bm\in{\boldsymbol Z}_h} \left\|h^{3/2} (({\cal B}_h\be_h - \bm_h)\right\|_h \left\|h^{-3/2} (\bz-\bpsi_h)\right\|_h
 \nonumber
 \\
 &{} \le C_{\ref{lema_41}}\left(\left\| h^2\llbracket {\cal B}_h\be_h\rrbracket\right\|_{{\cal F}^i} + \left\| h^3\llbracket \nabla {\cal B}_h\be_h\rrbracket\right\|_{{\cal F}^i}\right)
  \left\|h^{-3/2} (\bz-\bpsi_h)\right\|_h,
  \nonumber
 \end{align}
  where, as before, in the last inequality we have applied Lemma \ref{lema_41}. 
  Thus, from~\eqref{decomp1}, \eqref{paso_22} and \eqref{tercer_paso_20} and 
  and adding and subtracting ${\cal B}\be_h$, we may write
  \begin{align}
 \label{tercer_paso_22}
 &\left|({\cal B}\be_h,\bz-\bpsi_h)_h\right| \le\nonumber\\
 &\quad C_{\ref{lema_41}}\left(\left\| h^2\llbracket ({\cal B}_h-{\cal B})\be_h\rrbracket\right\|_{{\cal F}^i} + \left\| h^3\llbracket \nabla ({\cal B}_h-{\cal B})\be_h\rrbracket\right\|_{{\cal F}^i}\right) \left\|h^{-3/2} (\bz-\bpsi_h)\right\|_h
 \nonumber\\
& \quad{}+C_{\ref{lema_41}}\left(\left\| h^2\llbracket ({\cal B}\be_h\rrbracket\right\|_{{\cal F}^i} + \left\| h^3\llbracket \nabla {\cal B}\be_h\rrbracket\right\|_{{\cal F}^i}\right) \left\|h^{-3/2} (\bz-\bpsi_h)\right\|_h
 \nonumber\\
&\quad{} +  \frac{C_{\bu,h}^2}{2v_0} \left\| \be_h\right\|^2
+ \frac{c_{\rm inv}^2C_{\cal B}^2}{2}  \| \bz -\bpsi_h\|_{*}^2.
 \end{align}
In view of \eqref{u-sh-0} and~\eqref{u-sh-1} and applying inverse inequality \eqref{inv} we have
\begin{align}
\label{paso_231}
\left\| h^2\llbracket ({\cal B}_h-{\cal B})\be_h\rrbracket\right\|_{{\cal F}^i} &\le CC_{\bu,h}\left\| h^{1/2}\be_h\right\|_{{\cal F}^i}
\nonumber\\
&{}\le CC_{\bu,h}(\|\be_h\|_0 + h\|\nabla\be_h\|_0)
\nonumber\\
&  CC_{\bu,h}(1+c_{\rm inv})\|\be_h\|_0.
\end{align}
Arguing as before and applying  \eqref{u-sh-0}, \eqref{u-sh-1} and~\eqref{u-sh-2} together with inverse inequality \eqref{inv} we have
\begin{align}
\label{paso_232}
\left\| h^3\llbracket \nabla ({\cal B}_h-{\cal B})\be_h\rrbracket\right\|_{{\cal F}^i} &\le CC_{\bu,h}\left\| h^{1/2}\be_h\right\|_{{\cal F}^i}
\nonumber\\
&{}\le CC_{\bu,h}(\|\be_h\|_0 + h\|\nabla\be_h\|_0)
\nonumber\\
&  CC_{\bu,h}(1+c_{\rm inv})\|\be_h\|_0.
\end{align}
Thus, in view of \eqref{tercer_paso_22}, \eqref{paso_231} and~\eqref{paso_232}, we conclude
\begin{align}
 \label{tercer_paso_4}
 \left|({\cal B}\be_h,\bz-\bpsi_h)_h\right| &\le
 \frac{\varepsilon}{2} S(\be_h,\be_h) + C_{\ref{lema_41}}^2\frac{C_{\bu_h}}{2\varepsilon \delta_{\min}d v_0}  \| \bz -\bpsi_h\|_{*}^2
 \nonumber\\
 &\quad {}+   \frac{C_{\bu,h}^2}{v_0} \left\| \be_h\right\|^2
+ C \| \bz -\bpsi_h\|_{*}^2.
 \end{align}
 Observe that the constants~$C$ in~\eqref{paso_231} and~\eqref{paso_232}, depend on the number of terms of~${\cal B}$ (hence, on the dimension~$d$), the constant $C_{\rm tr}$ in the trace inequality~\eqref{trace}, and the constant $c_{\rm inv}$ in the inverse inequatlity \eqref{inv}. Since, as mentioned, the constant~$C_{\cal B}$ multiplying  $\| \bz -\bpsi_h\|_*^2$ in~\eqref{tercer_paso_22} depends on the number of terms in~${\cal B}$, we conclude that the constant~$C$ in~\eqref{tercer_paso_4} depends on the dimension, $C_{\rm tr}$ $c_{\rm inv}$ and the constant~$C_{\ref{lema_41}}$ in~Lemma~\ref{lema_41}.
\end{proof}
In next theorem we prove the main result of the paper.
\begin{Theorem}\label{th:main} Let $k\ge 2$ and assume that 
$\bu \in L^{\infty}(0,T,H^{k+1})$ and that $u_t\in L^2(0,T,H^{k+1})$.  Assume also \eqref{u0}
and that
\begin{equation}
\label{cond_e0}
\left\| \be_h(0)\right\|_0\le c_1h^{k+1/2},
\end{equation}
for some constant~$c_1>0$.
Then, there exist a positive constant $C_3$ such that the following bound holds,
\begin{eqnarray*}
&&\max_{0\le t\le T}\left\| \be_h(t)\right\|_0 +\left(\int_0^TS(\be_h(t),\be_h(t))dt\right)^{1/2}\le\\
&&\quad C_3\left(\left\|\be_h(0)\right\| + h^{k+1}\biggl(T\int_0^T \left\|\bu_t(t)\right\|_{k}^2\,dt\biggr)^{1/2}+ T^{1/2}\max_{0\le t\le T}
\left\|\bz(t)-\bpsi_h(t)\right\|_{*}\right).
\end{eqnarray*}
\end{Theorem}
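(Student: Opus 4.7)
The plan is to start from the refined error identity \eqref{eq_error_3} and massage it into a Gronwall-ready differential inequality whose right-hand side contains only an absorbable multiple of $\|\be_h\|_0^2$ and data terms of total size $O(h^{2k+1})$. The two non-trivial residual terms on the right-hand side of \eqref{eq_error_3} are $2(\bu_h\cdot\nabla\bbeta_h,\be_h)$ and $S(\bw_h,\bw_h)$, which are precisely what Lemmas \ref{cons1} and \ref{cons2} are designed to control. Their combined output produces contributions of the type $\|\bz-\bpsi_h\|_*^2/v_0$, which, by \eqref{cota_Ph_z} and the definition \eqref{estrella}, scale like $(\nu+v_0 h)h^{2k}\|\bz\|_{k+2}^2$, i.e.\ $O(h^{2k+1})$ in the convection-dominated regime; taking a square root reproduces exactly the advertised $h^{k+1/2}$ rate.

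Before the two lemmas can be applied, their hypotheses must be verified uniformly in $t\in[0,T]$. The assumption \eqref{cond_e0} with exponent $k+1/2$ is strictly stronger than the exponent $k$ required in \eqref{cond_e0a}, so Proposition \ref{Prop0} applies and yields $\|\be_h(t)\|_0\le C_0 h^k$. Since $k\ge 2$, this delivers the threshold \eqref{threshold} of Lemma \ref{cons1} with $\Gamma=C_0$. An $L^\infty$ bound \eqref{cota_uh} on $\bu_h$ follows from the triangle inequality $\|\bu_h\|_\infty\le\|\bw_h\|_\infty+c_{\rm inv}h^{-d/2}\|\be_h\|_0$, combined with \eqref{w_h_inf} and the rate-$k$ bound just obtained (noting that $h^{k-d/2}$ stays bounded for $d\in\{2,3\}$ and $k\ge 2$). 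Together with \eqref{u0} these also keep the factor $\|\bu_h\|_\infty^{-1}$ in Lemma \ref{cons2} safely bounded.

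With the hypotheses secured, insert the outputs of Lemmas \ref{cons1} and \ref{cons2} into \eqref{eq_error_3}, choose $\varepsilon\le 1/4$ so that the term $2\varepsilon S(\be_h,\be_h)$ from Lemma \ref{cons1} is absorbed into half of the left-hand-side $S(\be_h,\be_h)$, and use \eqref{u0} to handle the coefficient $h\|\nabla\bw_h\|_\infty^2/\|\bu_h\|_\infty$ from Lemma \ref{cons2}. The resulting inequality has the form
\begin{equation*}
\frac{d}{dt}\|\be_h\|_0^2+\nu\|\nabla\be_h\|_0^2+\tfrac{1}{2}S(\be_h,\be_h)\le L_h\|\be_h\|_0^2+R(t),
\end{equation*}
with $L_h$ bounded independently of $h$ and $\nu$, and $R(t)$ a linear combination of $h^{2k+2}\|\bu_t(t)\|_{k+1}^2$ and $\|\bz(t)-\bpsi_h(t)\|_{*}^2$ terms. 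Gronwall's lemma applied to $\|\be_h(t)\|_0^2$ yields the stated pointwise-in-time $L^2$ bound; integrating the inequality over $[0,T]$ and feeding in the Gronwall bound on $\int_0^T\|\be_h\|_0^2\,ds$ gives the companion bound on $\int_0^T S(\be_h,\be_h)\,ds$. Combining the two and taking square roots yields the estimate of the theorem.

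The main obstacle is precisely the bootstrap just described: Lemma \ref{cons1} becomes useful only if the coefficient $C_{\bu,h}^2/v_0$ multiplying $\|\be_h\|_0^2$ can be treated as a genuine Gronwall constant, i.e.\ bounded uniformly in $h$. This is not automatic, because $C_{\bu,h}$ depends on the a priori rate-$k$ parameter $\Gamma$. Proposition \ref{Prop0} supplies exactly this bound under the weaker hypothesis \eqref{cond_e0a}, which is implied by \eqref{cond_e0}, so that $\Gamma$ is controlled by the data alone and $C_{\bu,h}$ does not blow up with $h$. Without this prior rate-$k$ estimate the differential inequality would not close, and this is the conceptual crux of why two successive error analyses (first rate $k$, then rate $k+1/2$) are needed rather than a single one.
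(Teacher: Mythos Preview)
Your proposal is correct and follows essentially the same approach as the paper: start from \eqref{eq_error_3}, invoke Proposition~\ref{Prop0} to secure the a priori rate-$k$ bound (and hence the hypotheses \eqref{cota_uh} and \eqref{threshold} of Lemma~\ref{cons1} as well as the $L^\infty$ bound on $\bu_h$), apply Lemmas~\ref{cons1} and~\ref{cons2} with $\varepsilon=1/4$ to control the two remaining terms, and close with Gronwall. Your explicit emphasis on the bootstrap---that Proposition~\ref{Prop0} is what keeps $C_{\bu,h}$ (and therefore the Gronwall constant $\hat L_h$) bounded uniformly in $h$---is precisely the conceptual point of the two-stage argument, and the paper relies on it in the same way even though it states the verification of \eqref{threshold} less explicitly than you do.
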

\begin{proof}
We first notice that by writing $\bu_h = \bw_h + (\bu_h-\bw_h)$, from Proposition~\ref{Prop0}  and the inverse inequality it follows that
\begin{equation}
\label{apriori1}
\left\| \bu_h(t)\right\|_\infty \le\left\|\bw_h(t)\right\|_\infty+ c_{\rm inv}C_0h^{(2k-d)/2} 
\le C_{\bu_h}:=C_{\bw_h}+c_{\rm inv}C_0h^{(2k-d)/2}.
\end{equation}
Starting from \eqref{eq_error_3}
\begin{eqnarray}\label{lasti}
\frac{d}{dt}\|\be_h\|_0^2+{\nu}\|\nabla \be_h\|_0^2+S(\be_h,\be_h)\le\left(3\|\nabla \bu\|_\infty+\frac{1}{T}\right){\|\be_h\|_0^2}+2(\bu_h\cdot\nabla\bbeta_h,\be_h)\nonumber\\
\quad+S(\bw_h,\bw_h)+CTh^{2k+2}\|\bu_t\|_{k+1}+C\left(\frac{h	}{v_0}\left\|\nabla\bu\right\|_\infty+1\right)
\|\bz-\bpsi_h\|_*^2,
\end{eqnarray}
we apply Lemmas \ref{cons1} and \ref{cons2} to bound the two remaining terms.
To bound the stabilization term in view of \eqref{eq:Sww} we get
\begin{eqnarray*}
S(\bw_h,\bw_h)\le \left(C\frac{\delta_{\rm max}}{u_0}\|\nabla \bw_h\|_\infty^2 h\right)\|\be_h\|_0^2
+\frac{C_{\bu,\bw_h}}{v_0}\left\| \bz-\bpsi_h\right\|_{*}^2,
\end{eqnarray*}
where $C_{\bu,\bw_h}$ is the constant in \eqref{Cuw}.
For the nonlinear term we apply Lemma~\ref{cons1} with $\varepsilon=1/4$. Then
\begin{align*}
2\left| ((\bu_h\cdot\nabla)\bbeta_h,\be_h)\right| \le& 
 \frac{2C_{\bu,h}^2}{v_0} \left\|\be_h\right\|_0^2 + \frac{1}{2} S(\be_h,\be_h)
\nonumber\\
&\quad{} +\left(\left(1+C_{\ref{lema_41}}^2\right)\frac{4C_{\bu_h}}{\delta_{\min} v_0}+2C_1\right)
\left\| \bz-\bpsi_h\right\|_{*}^2.
\end{align*}
Inserting the two bounds above into \eqref{lasti} we get
\begin{eqnarray*}
\frac{d}{dt}\|\be_h\|_0^2+{\nu}\|\nabla \be_h\|_0^2+\frac{1}{2}S(\be_h,\be_h)&\le&\hat L_h \|\be_h\|_0^2+CTh^{2k+2}\|\bu_t\|_{k+1}
\\
&&\quad+g_h\left\| \bz-\bpsi_h\right\|_{*}^2,
\end{eqnarray*}
where
\begin{eqnarray*}
\hat L_h&=&3\|\nabla \bu\|_{L^\infty(0,T,L^\infty)}+\frac{1}{T}+C\frac{\delta_{\rm max}}{\bu_0}C_{1,\bw_h}^2 h+\frac{2}{v_0}\max_{0 \le t\le T}C_{\bu,h}^2,\\
g_h&=&C\left(\frac{h	}{v_0}\left\|\nabla\bu\right\|_{L^\infty(0,T,L^\infty)}+1\right)+\frac{C_{\bu,\bw_h}}{v_0}
+\left(\left(1+C_{\ref{lema_41}}^2\right)\frac{4C_{\bu_h}}{\delta_{\min} v_0}+2C_1\right).
\end{eqnarray*}
Thus, applying Gronwall's Lemma as in the proof of Proposition~\ref{Prop0} the proof is finished.
\end{proof}
\begin{corollary}
In the conditions of Theorem~\ref{th:main} there exist a constant~$C_4$ such that the following bound holds for $h\ge \mu/v_0$ and~$t\in [0,T]$:
$$
\left\|\be_h(t)\right\|_0\le C_4h^{k+1/2}.
$$
Applying triangle inequality together with \eqref{cota_Ph_z} we conclude
$$
\left\|\bu(t)-\bu_h(t)\right\|_0\le C_4h^{k+1/2}.
$$
\end{corollary}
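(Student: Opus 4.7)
The plan is to invoke Theorem~\ref{th:main} and verify that each of the three summands on its right-hand side is of order~$h^{k+1/2}$. The first summand,~$\|\be_h(0)\|$, is bounded by~$c_1 h^{k+1/2}$ by the hypothesis~\eqref{cond_e0}. The second summand contributes~$O(h^{k+1})$, which is strictly smaller than the target rate. The real work is concentrated in estimating the third summand~$\max_{0\le t\le T}\|\bz(t)-\bpsi_h(t)\|_*$.

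For this, I would unfold the definition~\eqref{estrella} and control each term of the form~$h^{2j-4}\|D^j(\bz-\bpsi_h)\|_h^2$ via the projection estimate~\eqref{cota_Ph_z}. That estimate yields~$\|D^j(\bz-\bpsi_h)\|_h\le C h^{k+2-j}\|\bz\|_{k+2}$ for~$j\le k+1$, hence every such summand is bounded by~$C h^{2k}\|\bz\|_{k+2}^2$ uniformly in~$j$. The borderline case~$j=k+2$ only occurs for~$k\in\{2,3\}$ because the sum in~\eqref{estrella} is capped at~$\min(5,k+2)$; here~$\bpsi_h$ is piecewise polynomial of degree~$k+1$, so~$D^{k+2}\bpsi_h\equiv 0$ elementwise and the summand reduces to~$h^{2k}\|D^{k+2}\bz\|_h^2$, which again admits the same bound. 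Combining these element-by-element estimates and pulling out the prefactor~$(\nu+v_0 h)$ gives
\[
\|\bz-\bpsi_h\|_*^2 \le C(\nu+v_0 h)\,h^{2k}\,\|\bz\|_{k+2}^2.
\]
The hypothesis~$h\ge \nu/v_0$ then permits the replacement~$\nu+v_0 h\le 2v_0 h$, from which~$\|\bz-\bpsi_h\|_* \le C v_0^{1/2}\, h^{k+1/2}\,\|\bz\|_{k+2}$. Substituting this and the initial-error estimate back into Theorem~\ref{th:main} yields the first claim~$\|\be_h(t)\|_0\le C_4 h^{k+1/2}$.

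For the second claim, I would apply the triangle inequality~$\|\bu-\bu_h\|_0 \le \|\bbeta_h\|_0 + \|\be_h\|_0$. Since~$\bbeta_h = \mathrm{curl}(\bz-\bpsi_h)$,~\eqref{cota_Ph_z} applied with~$|\balpha|=1$ immediately yields~$\|\bbeta_h\|_0 \le C h^{k+1}\|\bz\|_{k+2}$, an order higher than the target, so the combined bound is of order~$h^{k+1/2}$ as claimed.

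The principal technical point is the star-norm calculation, and in particular ensuring that the cap~$\min(5,k+2)$ in~\eqref{estrella} does not demand a derivative beyond what~\eqref{cota_Ph_z} delivers. The key observation that resolves this is that the top index only appears for~$k\in\{2,3\}$, where~$D^{k+2}\bpsi_h\equiv 0$ elementwise. Everything else is bookkeeping of constants plus the convection-dominated condition~$h\ge \nu/v_0$ which converts the~$(\nu+v_0 h)$ factor in the star-norm into the extra half power of~$h$ that distinguishes this estimate from the order~$k$ bound of Proposition~\ref{Prop0}.
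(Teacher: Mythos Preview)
Your proposal is correct and follows precisely the route the paper intends; the paper states the Corollary without a detailed proof, leaving the argument to the reader with only the hint ``applying triangle inequality together with~\eqref{cota_Ph_z}''. Your treatment of the borderline index $j=k+2$ in the star norm (for $k\in\{2,3\}$), where \eqref{cota_Ph_z} does not directly apply and one must use that $D^{k+2}\bpsi_h\equiv 0$ elementwise, is a careful point that the paper glosses over but which is needed to make the estimate rigorous.
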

\begin{remark}
Since we are using inf-sup stable elements the error analysis of the pressure is standard. One can follow the steps of
\cite[Theorem 2]{nos_grad_div} applying \cite[Lemma 1]{cor} to bound $\|\be_{h,t}\|_{-1}$ and taking into account that for
$\bv_h\in \bV_h$ it holds $S(\bv_h,\bv_h)\le C h^{1/2}\|\bv_h\|_1$. Then, assuming conditions of Theorem \ref{th:main} hold it is not difficult to prove the following bound
for the pressure
$$
\left\|p(t)-p_h(t)\right\|_0\le Ch^{k}.
$$
\end{remark}
\section{Numerical Experiments}
We check the order of convergence shown in Theorem~\ref{th:main}. For this purpose,
we consider the Navier-Stokes equations in  the domain $\Omega=[0,1]^2$ with $T=4$, and the forcing term~$\bff$ chosen so that the solution $\bu$ and~$p$ are given by
\begin{eqnarray}
\label{eq:exactau}
\bu(x,y,t)&=& \frac{6+4\cos(4t)}{10} \left[\begin{array}{c} 8\sin^2(\pi x) 2y(1-y)(1-2y)\\
-8\pi\sin(2\pi x) (y(1-y))^2\end{array}\right],\\
p(x,y,t)&=&\frac{6+4\cos(4t)}{10} \sin(\pi x)\cos(\pi y).
\label{eq:exactap}
\end{eqnarray}
The initial condition was set to the Stokes projection  of~$u(0)$ onto~$V_h$.

We present results for the case $k=2$ on barycentric-refined grids of regular triangulations with SW-NE diagonals,
with $N=6$, $12$, $24$ and~$48$ subdivisions in each coordinate direction. In Fig.~\ref{red} we can see the coarsest mesh used. For the CIP stabilized method the parameters were set to $\gamma_1=10^{-2}$, $\gamma_2=10^{-4}$ and~$\gamma_3=10^{-3}$. These values were found to produce the best results after a simple and quick search starting from the values used in~\cite{barren_cip}.
\begin{figure}
\begin{center}
\includegraphics[height=5truecm]{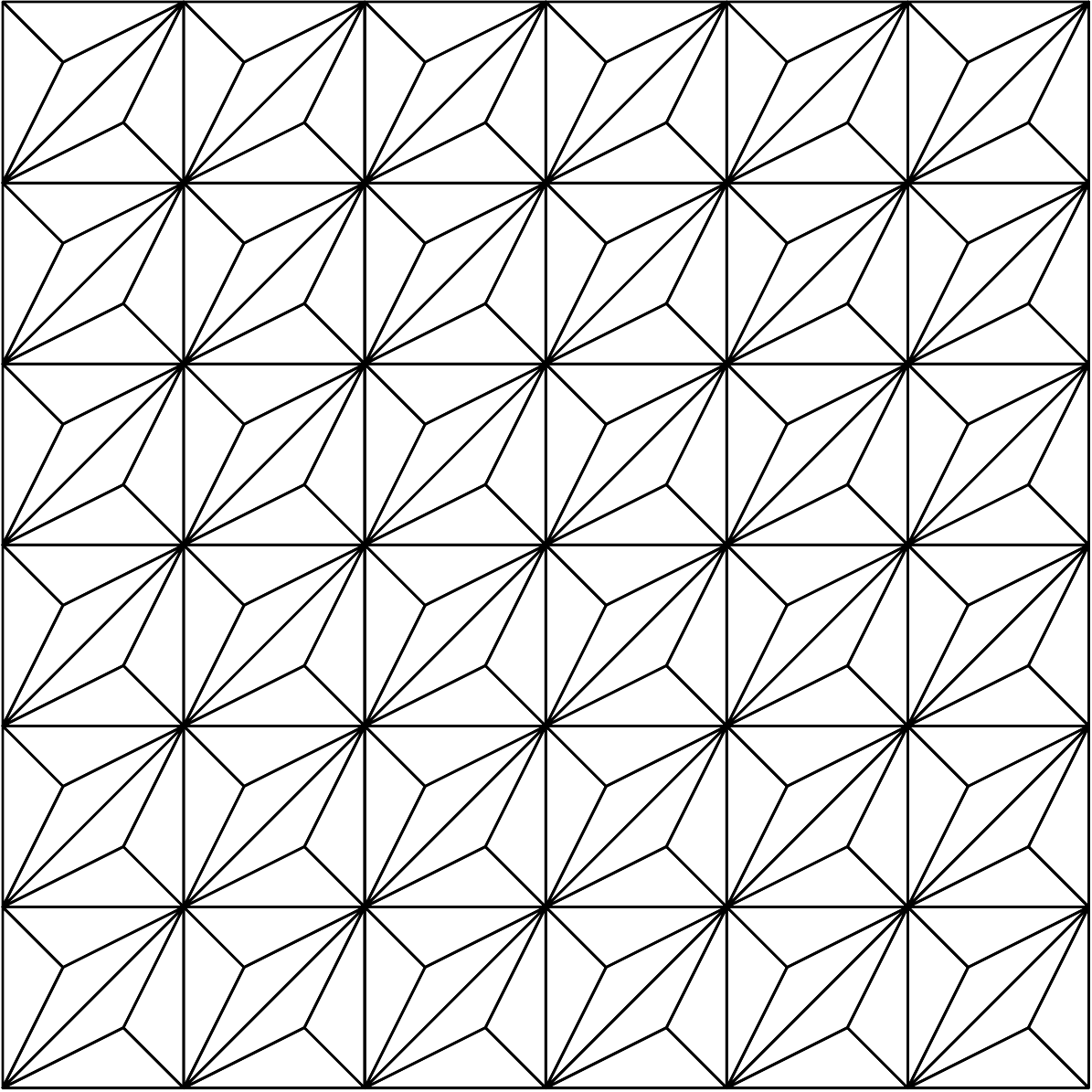}
\caption{Coarsest mesh used\label{red}}
\end{center}
\end{figure}

The time integrator used was a semi-implicit variable step size variable order code described
in~\cite{BDF2}. The tolerances for the local errors were chosen sufficiently small so that the dominant errors were those arising from the spatial discretization. We obtained approximations every $0.1$ units of time until $T=4$ and computed the maximum error in $L^2$ of these approximation.
In Fig.~\ref{svsvcip} (left plot) we show the errors of the CIP-stabilized Scott-Vogelius approximation for $\nu=10^{-8}$. We also show the results of the same method without stabilization. It can be seen a dramatic improvement when stabilization is added. Also we show the slopes of a least squares fit to the three smallest errors of each method, which indicate the decay rate with the mesh diameter~$h$. We can see that rate of the stabilized method is 1/2 larger than that of the unstabilized method, as predicted by the theory, and close to the theoretical value of~2.5.
\begin{figure}[h]
\begin{center}
\includegraphics[height=5truecm]{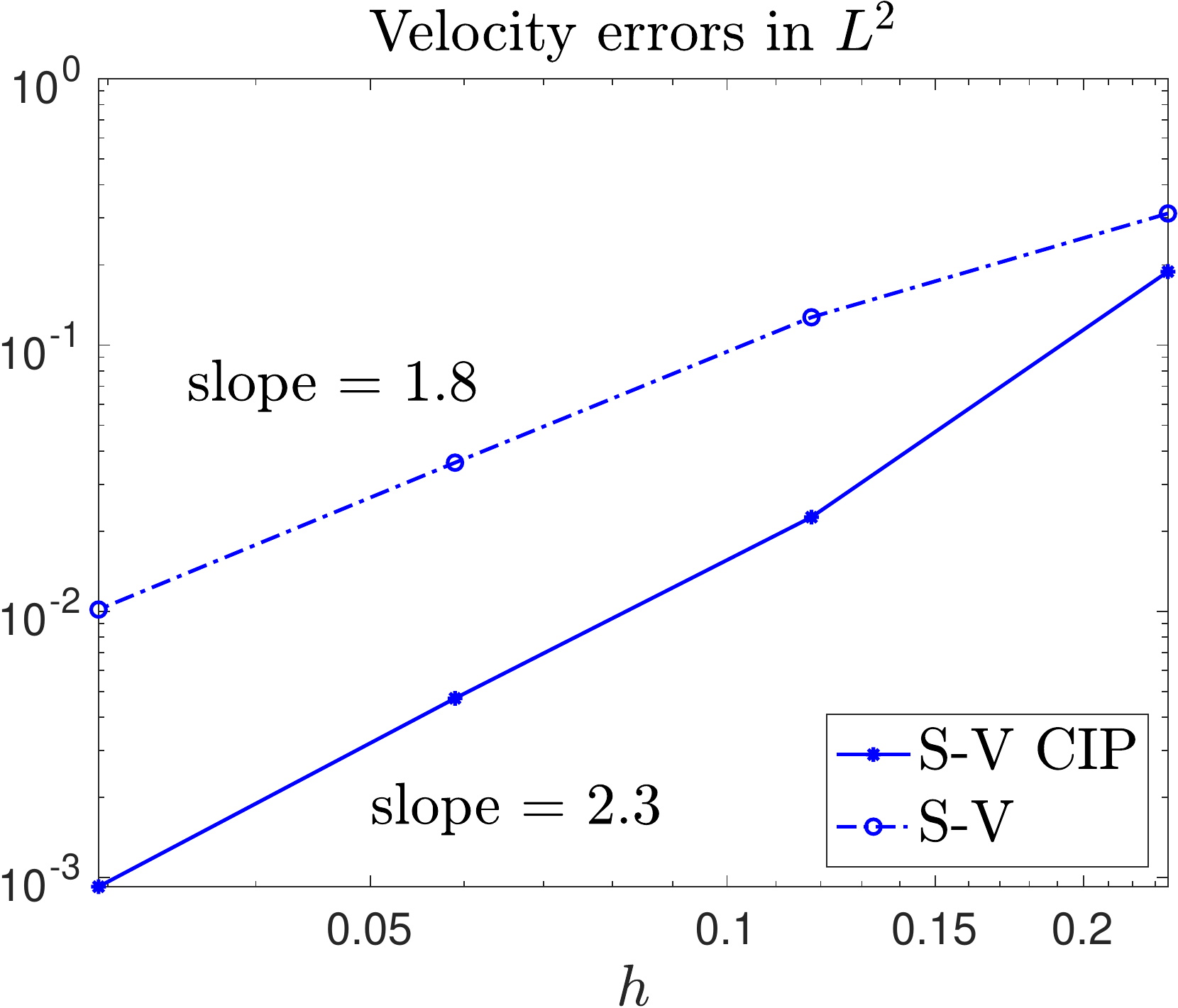}
\includegraphics[height=5truecm]{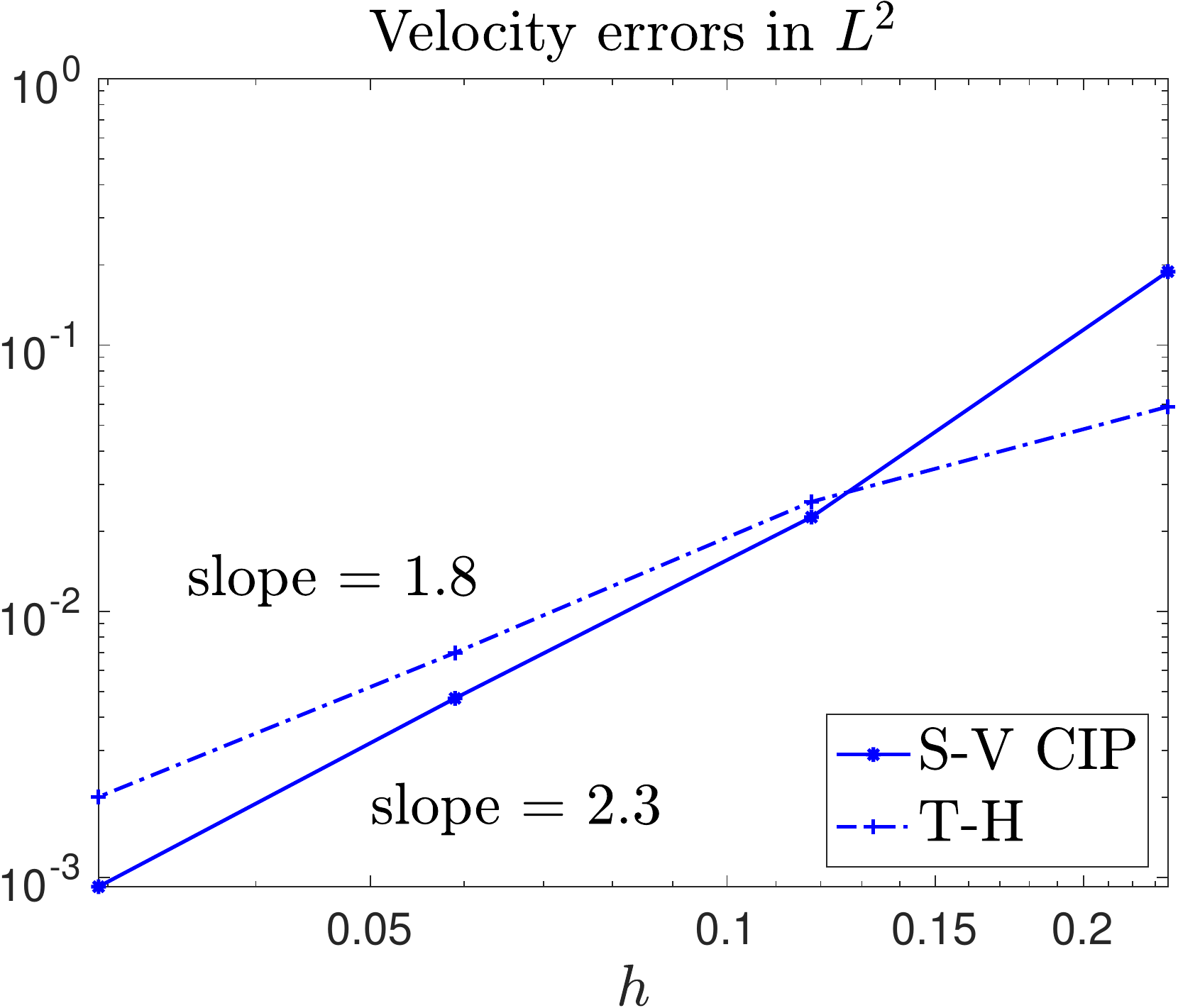} 
\caption{Velocity errors in $L^2$. Left: Scott-Vogelius elements with and without CIP-stabilization. Right: CIP-stabilized Scott-Vogelius elements and Taylor-Hood elements with grad-div stabilization.\label{svsvcip}}
\end{center}
\end{figure}
On the right-plot we also show the errors of Taylor-Hood elements with grad-div stabilization (the parameter for the grad-div term was set to~$0.05$, as in~\cite{grad-div} and~\cite{BDF2}). It is well-known that, due to the larger pressure constraint,  Scott-Vogelius elements are usually less accurate that stabilized Taylor-Hood elements, unless in the presence of large pressure gradients. This is indeed the case in the present example, where we can see that the errors of the unstabilized Scott-Vogelius on the left plot are five times larger than those of the stabilized Taylor-Hood elements on the right-plot. However, thanks to its better convergence rate, the CIP-stabilized Scott-Vogelius method becomes more accurate than the stabilized Taylor-Hood method. For this, the rate of convergence in the present example is 1.8, close to the theoretical value of~2 (see e.g.~\cite{review}).

\bibliographystyle{abbrv}
\bibliography{references}
\end{document}